\newtheorem{thm}{Theorem}[section]
\newtheorem{theorem}[thm]{Theorem}
\newtheorem{corollary}[thm]{Corollary}
\newtheorem{observation}[thm]{Observation}
\newtheorem{lemma}[thm]{Lemma}
\newtheorem{proposition}[thm]{Proposition}
\newtheorem{definition}[thm]{Definition}
\theoremstyle{remark}
\newtheorem{remark}[thm]{Remark}
\newcommand{\RR}{\mathbb R}
\newcommand{\HH}{\mathcal H}
\newcommand{\ip}[2]{\left\langle#1,#2\right\rangle}
\begin{document}

\title[The distribution of frame vectors and coefficients]
{The distributions of Hilbert space frame vectors and frame coefficients}
\author[Brewster, Casazza, Pinkham and Woodland]{Kevin Brewster, Peter G. Casazza, Eric Pinkham and 
 Lindsey Woodland}
\address{Department of Mathematics, University
of Missouri, Columbia, MO 65211-4100}

\thanks{The authors were supported by
 NSF DMS 1307685; NSF ATD 1321779; and AFOSR  DGE51:  FA9550-11-1-0245}

\email{kjb886@missouri.edu, casazzap@missouri.edu,\newline eap9qc@missouri.edu, lmwvh4@missouri.edu}

\keywords{Frame Coefficients, Tight Frames, Equiangluar Tight Frames}

\subjclass{42C15}

\begin{abstract}
The most fundamental notion for Hilbert space frames is the sequence of frame coefficients for a vector $x$ in the space. Yet, we know little about the distribution of these coefficient sequences.  In this paper, we make the first detailed study of the distribution of the frame coefficients 
for vectors in a Hilbert space,
as well as the
related notion of the square sums of the distances of vectors in the space from the frame vectors.  
We will give some surprisingly exact calculations for special frames such as unit norm tight frames and equiangular frames.  This is a study which should have
been done 20 years ago.

\end{abstract}

\maketitle

\section{Introduction}

The most fundamental notion for a Hilbert space frame 
$\Phi=\{\varphi_i\}_{i=1}^N$ for $\HH_M$ is the sequence
of frame coefficients for a vector x, namely $\{\ip{x}{\varphi_i}\}_{i=1}^N$. Yet, we know little about the distribution of these coefficients, even for very specific frames.  In this paper, we will make the first detailed study of the distribution of the frame coefficients for general frames and then strengthen the results for several special classes of frames including unit norm tight frames
and equiangular frames. For this we use the concept of \emph{majorization}.
We will also study the distributions of products of frame coefficients for different vectors 
$x$ and $y$.
We will then make a detailed study of the square sums of the distances from a vector x to the frame vectors and discover that in quite general cases, these sums are nearly equal for all vectors.


\section{Preliminaries}\label{sec2}

While the following work is largely self contained, an interested reader can find a more detailed introduction to frame theory in \cite{CK,OLE,FFU}. Given $M\in\mathbb{N}$, $\HH_{M}$ represents a $($real or complex$)$ Hilbert space of $($finite$)$ dimension $M$.

\begin{definition}
A family of vectors $\{\varphi_i\}_{i=1}^N$ in an $M$-dimensional Hilbert space $\mathcal{H}_M$ is a {\bf frame} if there are constants $0 < A \leq B < \infty$ so that for all $x \in \mathcal{H}_M$,
\begin{equation*} 
A \| x \|^2 \leq \sum_{i=1}^N |\langle x,\varphi_i\rangle|^2 \leq B \|x\|^2,
\end{equation*}
where $A$ and $B$ are {\bf lower and upper frame bounds}, respectively.  The largest $A$ and smallest $B$ satisfying these inequalities are called the {\bf optimal frame bounds}.

\begin{enumerate}[(i)]

\item If $A = B$ is possible, then $\{\varphi_i\}_{i=1}^N$ is a {\bf A-tight frame}. If $A = B = 1$ is possible, then $\{\varphi_i\}_{i=1}^N$ is a {\bf Parseval frame}.

\item If $\|\varphi_i\| = 1$ for all $i \in [N]$ then $\{\varphi_i\}_{i=1}^N$ is an {\bf unit norm frame}. 

\item $\{\langle x,\varphi_i\rangle \}_{i = 1}^N$ are called the {\bf frame coefficients} of the vector $x\in \mathcal{H}_M$ with respect to frame $\{\varphi_i\}_{i=1}^N$.

\item If the frame is unit norm and there is a constant d so that $|\langle \varphi_i, \varphi_j\rangle|=d$ for all $1\leq i \neq j \leq N$, then $\{\varphi_i\}_{i=1}^N$ is an {\bf equiangular frame}.

\item The {\bf frame operator S} of the frame is given by
\[ Sx = \sum_{i=1}^N\langle x,\varphi_i\rangle \varphi_i.\]

\end{enumerate}
\end{definition}

The frame operator is a particularly important object in the study of frame theory. Here we present an important result result about this operator. The proof can be found in \cite{CK}.
\begin{theorem}
Let $\{\varphi_{i}\}_{i=1}^N$ be a frame for $\HH_M$ with frame bounds $A$ and $B$ and frame operator $S$. $S$ is positive, invertible, and self adjoint. Moreover, the optimal frame bounds of $\{\varphi_i\}_{i=1}^N$ are given by $A=\lambda_{min}(S)$ and $B=\lambda_{max}(S)$ (the maximum and minimum eigenvalues of $S$). 
\end{theorem}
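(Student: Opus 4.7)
The plan is to verify the three structural properties of $S$ (self-adjointness, positivity, invertibility) by direct manipulation of the defining formula, and then to extract the optimal frame bounds via the spectral theorem applied to $S$.

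First, I would compute $\langle Sx, y\rangle$ and $\langle x, Sy\rangle$ directly from $Sx=\sum_{i=1}^N \langle x,\varphi_i\rangle \varphi_i$. Both expand to $\sum_{i=1}^N \langle x,\varphi_i\rangle\langle \varphi_i, y\rangle$ (after using conjugate-linearity in the second slot for the second expression), so $S=S^*$. Setting $y=x$ yields the key identity
\begin{equation*}
\langle Sx,x\rangle = \sum_{i=1}^N |\langle x,\varphi_i\rangle|^2,
\end{equation*}
which is nonnegative, showing $S$ is positive. The frame inequality then reads
\begin{equation*}
A\|x\|^2 \leq \langle Sx,x\rangle \leq B\|x\|^2 \quad \text{for all } x\in\HH_M.
\end{equation*}
Since $A>0$, the lower bound gives $\langle Sx,x\rangle>0$ for $x\neq 0$, so $Sx=0$ forces $x=0$. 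In the finite-dimensional space $\HH_M$, injectivity of the linear map $S$ is equivalent to invertibility.

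Next, because $S$ is self-adjoint on a finite-dimensional Hilbert space, the spectral theorem provides an orthonormal basis $\{e_j\}_{j=1}^M$ of $\HH_M$ consisting of eigenvectors of $S$ with real eigenvalues $\lambda_1,\dots,\lambda_M$. Evaluating the frame inequality on a unit eigenvector $e_j$ gives $A\leq \lambda_j\leq B$ for every $j$; in particular $\lambda_{\min}(S)\geq A$ and $\lambda_{\max}(S)\leq B$ for any valid pair of frame bounds. Conversely, expanding an arbitrary $x=\sum_j c_j e_j$ yields
\begin{equation*}
\langle Sx,x\rangle = \sum_{j=1}^M \lambda_j |c_j|^2, \qquad \|x\|^2=\sum_{j=1}^M |c_j|^2,
\end{equation*}
so $\lambda_{\min}(S)\|x\|^2 \leq \langle Sx,x\rangle \leq \lambda_{\max}(S)\|x\|^2$, showing that $A=\lambda_{\min}(S)$ and $B=\lambda_{\max}(S)$ are themselves valid frame bounds. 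Combining the two directions gives optimality.

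There is no substantial obstacle here; the entire argument is an exercise in unpacking definitions once one has the identity $\langle Sx,x\rangle=\sum_i|\langle x,\varphi_i\rangle|^2$ in hand. The only point that requires a named tool is the appeal to the spectral theorem for self-adjoint operators on a finite-dimensional inner product space, which is standard and supplies the eigenbasis used to pin down the optimal constants.
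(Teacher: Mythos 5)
Your proof is correct and complete. The paper does not supply its own argument for this theorem --- it simply cites the reference \cite{CK} --- and what you have written is precisely the standard proof one finds there: self-adjointness and positivity fall out of the identity $\langle Sx,x\rangle=\sum_{i=1}^N|\langle x,\varphi_i\rangle|^2$, invertibility follows from the strict lower frame bound $A>0$ together with finite-dimensionality, and the identification of the optimal bounds with $\lambda_{\min}(S)$ and $\lambda_{\max}(S)$ is the two-sided argument via the spectral theorem that you give. There is nothing to add or correct.
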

Now we make a simple observation about the frame bound of a unit norm tight frame.
\begin{proposition}
If $\{\varphi_i\}_{i=1}^N$ is a unit norm tight frame for $\mathcal{H}_M$ then the frame bound will be $A=N/M$.
\end{proposition}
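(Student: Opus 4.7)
The plan is to compute the trace of the frame operator $S$ in two different ways and equate the results.

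First I would invoke the preceding theorem: since $\{\varphi_i\}_{i=1}^N$ is tight with bound $A$, we have $\lambda_{\min}(S) = \lambda_{\max}(S) = A$, and because $S$ is self-adjoint and diagonalizable, this forces $S = A\cdot I_{\HH_M}$. Taking the trace on this side gives $\tr(S) = A \cdot M$.

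Next I would compute $\tr(S)$ directly from the defining expression $Sx = \sum_{i=1}^N \langle x, \varphi_i\rangle \varphi_i$. Writing $S = \sum_{i=1}^N \varphi_i \varphi_i^*$ (the outer product form) and using linearity together with $\tr(\varphi_i \varphi_i^*) = \|\varphi_i\|^2 = 1$ (here the unit norm hypothesis enters), one gets $\tr(S) = \sum_{i=1}^N \|\varphi_i\|^2 = N$. Equating the two expressions yields $AM = N$, hence $A = N/M$.

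There is no real obstacle here; the only subtlety is making sure the trace is computed in a basis-free way (or alternatively, picking an orthonormal basis $\{e_j\}_{j=1}^M$ and expanding $\tr(S) = \sum_{j=1}^M \langle Se_j, e_j\rangle = \sum_{j=1}^M \sum_{i=1}^N |\langle e_j, \varphi_i\rangle|^2 = \sum_{i=1}^N \|\varphi_i\|^2$ by swapping the order of summation and applying Parseval's identity for the basis). The argument is essentially a one-line trace identity.
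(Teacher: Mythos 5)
Your proof is correct: the paper states this proposition without proof (calling it a ``simple observation''), and the trace identity you use --- $\tr(S)=AM$ from $S=A\cdot I$ on one hand and $\tr(S)=\sum_{i=1}^N\|\varphi_i\|^2=N$ on the other --- is the standard argument the authors surely have in mind. No gaps; the step $S=A\cdot I$ is justified exactly as you say, either from the eigenvalue characterization in the preceding theorem or directly from the polarization of $\langle Sx,x\rangle=A\|x\|^2$.
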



\section{Estimating the number of non-zero frame coefficients}

In this section we will provide estimates on the number of 
indices for which the frame coefficients are non-zero. We will be looking for vectors producing the minimal number of non-zero frame coefficients, since there is always a dense set of vectors which have non-zero inner products with all the frame vectors.   For example, if $\{\varphi_i\}_{i=1}^N$ is a frame for $\HH_M$, then
$\varphi_i^{\perp}$ is a hyperplane for every $i\in [N]$.
If we choose 
\[ x \notin \cup_{i=1}^N \varphi_i^{\perp},\]
we have that 
\[ |\{i\in [N]:\langle x,\varphi_i\rangle \not= 0\}| = N.\]

In general, given a vector $x$ there may be only a few indices for which $\langle x,\varphi_i \rangle \not= 0$. For example, given $K$ copies 
of an orthonormal basis $\{e_i\}_{i=1}^{M}$ for $\HH_{M}$, say 
$\{e_{ij}\}_{1\leq i\leq M, \ 1\leq j\leq K},$
and choosing $x=e_{1}$ gives 
\begin{equation*}
\langle x, e_{ij}\rangle=0,
\quad\forall\,i\in\{2,\ldots,M\},\;j\in\{1,\ldots,K\}.
\end{equation*}
That is, we have only $K$ non-zero coefficients out of a total of 
$KM$. In other words, there are $KM/M$ non-zero coefficients. 
This is minimal with respect to this property as we demonstrate shortly.

\begin{theorem}
Let $\Phi=\{\varphi_i\}_{i=1}^{N}$ be a frame in $\HH_{M}$ with 
frame bounds $A,B$ and set 
$D:=\max\{\|\varphi_{i}\|^{2}:i\in [N]\}$. For any unit 
norm $x\in\HH_{M}$ define $J_x:=\{i\in [N]:\langle x,\varphi_i\rangle \not= 0\}$. Then
\begin{equation}\label{ENTFAA1.44}
|J_x|
\geq
\frac{A}{D}.
\end{equation}
So if $\Phi$ is a unit norm frame, then $|J_x| \ge A$ and
if it is a unit norm tight frame then $|J_x| \ge N/M$.

Moreover, if we have equality in \eqref{ENTFAA1.44} then
 the 
sub-collection of frame vectors $\{\varphi_{i}:i\in J_{x}\}$ 
spans a one-dimensional space.
\end{theorem}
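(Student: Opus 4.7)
My plan is to chain together the lower frame bound with a Cauchy--Schwarz estimate on each coefficient. Starting from $\|x\|=1$, I would write
\begin{equation*}
A = A\|x\|^2 \leq \sum_{i=1}^{N}|\langle x,\varphi_i\rangle|^2 = \sum_{i\in J_x}|\langle x,\varphi_i\rangle|^2,
\end{equation*}
where the last equality just drops the zero terms. Then I would bound each surviving summand by Cauchy--Schwarz: $|\langle x,\varphi_i\rangle|^2 \leq \|x\|^2\|\varphi_i\|^2 = \|\varphi_i\|^2 \leq D$. Summing over $J_x$ yields $A \leq |J_x|\,D$, which rearranges to \eqref{ENTFAA1.44}. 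The two named specializations (unit norm frames and unit norm tight frames) drop out immediately by setting $D=1$ and then recalling from the preceding proposition that the tight bound is $A = N/M$.

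For the equality case, my plan is to force every inequality in the above chain to be an equality and then read off the consequences. If $|J_x| = A/D$, then
\begin{equation*}
A \leq \sum_{i\in J_x}|\langle x,\varphi_i\rangle|^2 \leq \sum_{i\in J_x}\|\varphi_i\|^2 \leq |J_x|\,D = A,
\end{equation*}
so all three intermediate inequalities collapse to equalities. The middle equality $|\langle x,\varphi_i\rangle|^2 = \|\varphi_i\|^2$ for each $i\in J_x$ is exactly the equality case of Cauchy--Schwarz, hence each such $\varphi_i$ is a scalar multiple of $x$. (Since the sum equals $A>0$, at least one of these scalars is nonzero, so the span is genuinely one-dimensional, not zero-dimensional.) Therefore $\{\varphi_i : i\in J_x\}$ lies in the line $\mathrm{span}\{x\}$.

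There is no real obstacle here; the only subtlety is making sure that the equality case uses the right link in the inequality chain. The Cauchy--Schwarz step is what carries the geometric information about $x$ being parallel to the $\varphi_i$, while the frame bound step and the bound $\|\varphi_i\|^2 \leq D$ only enforce size information (that every active $\varphi_i$ has maximal norm $D$ and that the lower frame inequality is saturated by $x$). Both are automatic consequences of the setup once Cauchy--Schwarz saturates, so the one-dimensional span conclusion follows without further work.
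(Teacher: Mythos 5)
Your proof is correct and follows essentially the same route as the paper: the identical chain of inequalities (lower frame bound, dropping zero terms, Cauchy--Schwarz on each coefficient, then the bound $\|\varphi_i\|^2\leq D$), with the equality case read off from saturation of the Cauchy--Schwarz link to conclude each $\varphi_i$, $i\in J_x$, is a scalar multiple of $x$. Your added remark that the span is genuinely one-dimensional rather than zero-dimensional is a small point of care the paper omits.
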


\begin{proof}
Pick a unit norm $x\in\HH_{M}$ and set 
$J_{x}:=\{1\leq i\leq N:\langle x,\varphi_{i}\rangle\neq0\}$. Then
\begin{align}\label{ENTFAA1.45}
A
&=
A\|x\|^{2}
\leq
\sum\limits_{i=1}^{N}|\langle x,\varphi_{i}\rangle|^{2}
=
\sum\limits_{i\in J_{x}}|\langle x,\varphi_{i}\rangle|^{2}
\nonumber\\[2 pt]
&\leq
\sum\limits_{i\in J_{x}}\|x\|^{2}\|\varphi_{i}\|^{2}
=
\sum\limits_{i\in J_{x}}\|\varphi_{i}\|^2
\leq
D|J_{x}|.
\end{align}
Hence, $A/D$ is a lower bound for $|J_{x}|$ independent 
of each unit norm $x\in\HH_{M}$. It 
follows that \eqref{ENTFAA1.44} holds true.

Concerning the moreover part, if we have equality in 
\eqref{ENTFAA1.45}, then 
\[ |\langle x,\varphi_i\rangle|^2 = \|\varphi_i\|^2,\mbox{ for all 
} i \in J_x.\]
It follows that $\varphi_i = c_i x$, for some $|c_i|=1$, and
all $i\in J_x$.
\end{proof}
Note that the example preceding the theorem satisfies the minimal number of nonzero frame coefficients in the theorem. We may be interested in not only knowing when coefficients are nonzero, but when they are bounded away from zero as well. We provide lower bounds in the following theorem which can be viewed as a generalization of the previous.

\begin{theorem}
Let $C\in(0,1)$, let $\Phi=\{\varphi_{i}\}_{i=1}^{N}$ be a frame for 
$\HH_{M}$ with frame bounds $A,B$ and set 
$D:=\max\{\|\varphi_{i}\|^{2}:i\in [N]\}$. For a unit 
norm $x\in\HH_{M}$, define $\displaystyle K_x:=\{i\in [N]:|\langle x,\varphi_{i}\rangle|^{2}
>(CA)/N\}$. We have
\begin{equation}\label{ENTFAA1.52}
|K_x|\geq(1-C)\frac{A}{D}.
\end{equation}
In particular, if $\Phi$ is a unit norm tight frame, then 
\begin{equation*}
|K_x| \geq(1-C)\frac{N}{M}.
\end{equation*}
\end{theorem}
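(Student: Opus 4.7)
The plan is to mimic the proof of the previous theorem but split the frame expansion of $\|x\|^2$ into the contribution from the ``large'' coefficients (those indexed by $K_x$) and the ``small'' coefficients (those indexed by the complement $[N]\setminus K_x$). The lower frame bound gives $A\leq\sum_{i=1}^N |\langle x,\varphi_i\rangle|^2$ for any unit norm $x$, and this total must be absorbed by these two pieces.

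First I would bound the small part trivially: for $i\notin K_x$, by definition $|\langle x,\varphi_i\rangle|^2\leq CA/N$, so
\begin{equation*}
\sum_{i\notin K_x}|\langle x,\varphi_i\rangle|^2 \leq |[N]\setminus K_x|\cdot\frac{CA}{N}\leq N\cdot\frac{CA}{N}=CA.
\end{equation*}
Next I would bound the large part using Cauchy--Schwarz together with $\|x\|=1$ and the definition of $D$, exactly as in the previous theorem:
\begin{equation*}
\sum_{i\in K_x}|\langle x,\varphi_i\rangle|^2\leq \sum_{i\in K_x}\|\varphi_i\|^2\leq D\,|K_x|.
\end{equation*}

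Combining the two bounds with the frame inequality gives
\begin{equation*}
A\leq D\,|K_x|+CA,
\end{equation*}
and rearranging yields $|K_x|\geq (1-C)A/D$, which is exactly \eqref{ENTFAA1.52}. The unit norm tight frame case then follows by substituting $D=1$ and $A=N/M$ from the proposition of Section~\ref{sec2}.

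No step looks genuinely hard here; the only subtlety is making sure the strict inequality in the definition of $K_x$ is used in the correct direction for the complement (where we get the non-strict $\leq CA/N$, which is what the estimate requires). The argument really is a direct generalization of the preceding theorem, recovering it in the limit $C\to 1^-$ in spirit, since as $C$ approaches $1$ one detects fewer indices but with a stronger lower bound on each coefficient.
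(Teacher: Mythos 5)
Your proof is correct and follows essentially the same route as the paper: split the frame expansion of $\|x\|^2$ over $K_x$ and its complement, bound the small coefficients by $CA/N$ each and the large part by $D|K_x|$ via Cauchy--Schwarz, and rearrange. The only (harmless) difference is that you bound $|K_x^c|\le N$ immediately, whereas the paper keeps $|K_x^c|=N-|K_x|$ to obtain the slightly sharper intermediate estimate $|K_x|\ge(1-C)A/(D-CA/N)$ before weakening it to the stated bound.
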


\begin{proof}
For unit norm $x\in\HH_{M}$, we have 
\begin{equation*}
\sum\limits_{i\in K_{x}^{c}}|\langle x,\varphi_{i}\rangle|^{2}
\leq\frac{CA}{N}|K_{x}^{c}|
\qquad\mbox{and}\qquad
\sum\limits_{i\in K_{x}}
|\langle x,\varphi_{i}\rangle|^{2}\leq D|K_{x}|.
\end{equation*}
Hence,
\begin{align*}
A
&=
A\|x\|^{2}
\leq
\sum\limits_{i=1}^{N}|\langle x,\varphi_{i}\rangle|^{2}
=
\sum\limits_{i\in K_{x}}|\langle x,\varphi_{i}\rangle|^{2}
+\sum\limits_{i\in K_{x}^{c}}|\langle x,\varphi_{i}\rangle|^{2}
\nonumber\\[2 pt]
&\leq
D|K_{x}|+\frac{CA}{N}|K_{x}^{c}|
=
D|K_{x}|+\frac{CA}{N}(N-|K_{x}|)
\nonumber\\[2 pt]
&=
\Big(D-\frac{CA}{N}\Big)|K_{x}|+CA.
\end{align*}
It follows that
\begin{equation*}
A(1-C)\leq\Big(D-\frac{CA}{N}\Big)|K_{x}|.
\end{equation*}
By the definition of $K_{x}$ and the Cauchy-Schwarz inequality, we 
may deduce that $D>(CA)/N$.  Consequently 
\begin{equation}\label{ENTFAA1.56}
|K_{x}|\geq(1-C)\frac{A}{D-(CA)/N}\geq(1-C)\frac{A}{D}.
\end{equation}
Since the lower bound in \eqref{ENTFAA1.56} is independent of the 
unit norm $x\in\HH_{M}$, the validity of \eqref{ENTFAA1.52} 
follows.
\end{proof}

\section{The Distribution of the Frame Coefficients}

In this section we will classify the distribution of the frame coefficients using {\it majorization}. Majorization has an interesting theory in itself of which we will need only a small portion. The interested reader should refer to \cite{MAJORIZATION} for a detailed study of majorization inequalities and their applications.

\begin{definition}
Let $a=(a_i)_{i=1}^N$ and $b=(b_i)_{i=1}^N$ be vectors with non-negative, non-increasing components.
\begin{enumerate}[(i)]
	\item We say $a$ {\bf majorizes} $b$ and write $a\succ b$ if 
	\begin{equation*}
\sum\limits_{i=1}^{k}a_{i}\geq\sum\limits_{i=1}^{k}b_{i},
\;\;\forall\,k\in\{1,2,\ldots,N\}
\quad\mbox{and}\quad
\sum\limits_{i=1}^{N}a_{i}=\sum\limits_{i=1}^{N}b_{i}.
\end{equation*}
	\item We say $a$ {\bf weakly majorizes} $b$ and write $a\succ_W b$ if 
	
\begin{equation*}
\sum\limits_{i=1}^{k}a_{i}\geq\sum\limits_{i=1}^{k}b_{i},
\;\;\forall\,k\in\{1,2,\ldots,N\}.
\end{equation*}
\end{enumerate}

If the two vectors do not have the same number of terms we agree to pad the shorter vector with zeros so that they have the same length.

\end{definition}

It will be important for our work to understand what happens when partial sums of the majorization vectors are equal.

\begin{proposition}\label{ENTFAA1.6}
Let $a:=(a_{i})_{i=1}^{N}\in\mathbb{R}^{N}$ be a non-negative, 
non-increasing sequence of numbers, let 
$b:=(A,A,\ldots,A)\in\mathbb{R}^{N}$ where $A>0$, and assume 
$a\succ_{W}b$. If there exists $m\in\{1,\ldots,N\}$ so that 
$\sum\limits_{i=1}^{m}a_{i}=mA$, then $a_{i}=A$ for every 
$i\in\{m+1,\ldots,N\}$.
\end{proposition}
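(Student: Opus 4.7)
The plan is to exploit the interaction between the monotonicity of $a$, the equality case at index $m$, and the weak majorization condition at each subsequent index.

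First I would observe that the equality $\sum_{i=1}^m a_i = mA$ forces $a_m \leq A$. Indeed, since $a$ is non-increasing, $m a_m \leq \sum_{i=1}^m a_i = mA$, so $a_m \leq A$. Because $a$ is non-increasing, this upper bound propagates: $a_i \leq a_m \leq A$ for every $i \geq m+1$.

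Next I would use weak majorization at an arbitrary index $k \in \{m+1, \ldots, N\}$. The hypothesis $a \succ_W b$ gives
\begin{equation*}
\sum_{i=1}^{k} a_i \geq kA.
\end{equation*}
Subtracting the known equality $\sum_{i=1}^m a_i = mA$ yields
\begin{equation*}
\sum_{i=m+1}^{k} a_i \geq (k-m)A.
\end{equation*}
On the other hand, from the first step each of the $k-m$ summands satisfies $a_i \leq A$, so the reverse inequality $\sum_{i=m+1}^k a_i \leq (k-m)A$ also holds. Combined, these force equality in a sum of terms each bounded above by $A$, which is only possible if $a_i = A$ for every $i \in \{m+1, \ldots, k\}$. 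Since $k$ was an arbitrary index in $\{m+1, \ldots, N\}$, the claim follows.

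I do not anticipate a serious obstacle here; the argument is a short two-sided squeeze. The only subtlety is noticing that the equality $\sum_{i=1}^m a_i = mA$, combined with monotonicity, acts as a ceiling on all later entries, while weak majorization simultaneously provides a matching floor on their partial sums. The rest is bookkeeping.
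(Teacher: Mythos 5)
Your argument is correct and is essentially the same as the paper's: both derive $a_i \leq A$ for $i > m$ from the equality at $m$ together with monotonicity, and then use the weak majorization inequality (the paper only at $k=N$, you at each $k$) to force the tail sum up to $(N-m)A$, yielding equality term by term. The paper phrases the final step as a proof by contradiction while you phrase it as a direct two-sided squeeze, but this is a purely cosmetic difference.
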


\begin{proof}
Assume there exists $m\in\{1,\ldots,N\}$ so that 
$\sum\limits_{i=1}^{m}a_{i}=mA$. Then $a_{m}\leq A$ and, hence, 
$a_{i}\leq A$ for every $i\in\{m+1,\ldots,N\}$. Assume for the 
moment that $a_{i}<A$ for some $i\in\{m+1,\ldots,N\}$, then 
\begin{align*}
NA
&=
\sum\limits_{i=1}^{N}A
\leq
\sum\limits_{i=1}^{N}a_{i}
=
\sum\limits_{i=1}^{m}a_{i}
+
\sum\limits_{i=m+1}^{N}a_{i}
\nonumber\\[2 pt]
&<
mA+(N-m)A
=
NA,
\end{align*}
which is a contradiction. Thus, $a_{i}=A$ for every 
$i\in\{m+1,\ldots,N\}$. 
\end{proof}

The following proposition establishes a relationship between the largest eigenvalue of the frame operator 
 and the number of frame 
coefficients of modulus one.

\begin{proposition}
Let $\{\varphi_i\}_{i=1}^{N}$ be a unit norm frame in $\HH_{M}$ 
whose frame operator has eigenvalues 
$\lambda_{1}\geq\dots\geq\lambda_{M}$ and let 
$x\in\HH_{M}$ be unit norm. Define the vector $a:=(a_{i})_{i=1}^{N}\in\mathbb{R}^{N}$ to be 
$(|\langle x,\varphi_{i}\rangle|^{2})_{i=1}^{N}$ arranged in 
non-increasing order. Then $a_{i}=1$ for at most 
$\lfloor\lambda_{1}\rfloor$ indices $i$ (where 
$\lfloor\,\cdot\,\rfloor$ represents the least integer function).
\end{proposition}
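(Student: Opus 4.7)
The plan is to exploit two elementary facts: first, that unit norm vectors $x,\varphi_i$ satisfying $|\langle x,\varphi_i\rangle|=1$ must be unimodular scalar multiples of one another (Cauchy--Schwarz equality); and second, that the sum $\sum_{i=1}^N|\langle x,\varphi_i\rangle|^2$ equals the Rayleigh quotient $\langle Sx,x\rangle$ and is therefore bounded above by $\lambda_1$ when $\|x\|=1$.

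More precisely, suppose $a_i=1$ for exactly $k$ indices; by the arrangement, these are $a_1=a_2=\cdots=a_k=1$ corresponding to some frame indices $i_1,\ldots,i_k$. For each such $i_j$, since $\|x\|=\|\varphi_{i_j}\|=1$, the equality case of Cauchy--Schwarz forces $\varphi_{i_j}=c_{i_j}x$ for some scalar of modulus one. This step is only needed if one wants the geometric picture; strictly speaking, the numerical inequality below already closes the argument.

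Next I would invoke the formula $\langle Sx,x\rangle=\sum_{i=1}^N|\langle x,\varphi_i\rangle|^2$, together with the variational characterization $\langle Sx,x\rangle\leq\lambda_{\max}(S)\|x\|^2=\lambda_1$ valid for the self-adjoint positive operator $S$ (as recorded in the theorem cited earlier in Section~2). Since each of the $k$ ``top'' coefficients contributes exactly $1$ and the remaining contributions are non-negative, this gives the chain
\begin{equation*}
k \;=\; \sum_{j=1}^{k} a_j \;\leq\; \sum_{i=1}^{N}|\langle x,\varphi_i\rangle|^2 \;=\; \langle Sx,x\rangle \;\leq\; \lambda_1.
\end{equation*}
Because $k$ is a non-negative integer, $k\leq\lambda_1$ is equivalent to $k\leq\lfloor\lambda_1\rfloor$, which is the claim.

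There is no serious obstacle here: the proposition is essentially a one-line consequence of the Rayleigh quotient bound. The only thing to be a bit careful about is justifying the upper bound $\langle Sx,x\rangle\leq\lambda_1$, which is immediate from the spectral decomposition of the self-adjoint operator $S$ and already cited in the preliminaries. So I would keep the write-up short and let the inequality above carry the entire argument.
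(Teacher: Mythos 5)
Your proof is correct and rests on exactly the same key inequality as the paper's, namely $\sum_{i=1}^{N}|\langle x,\varphi_i\rangle|^2 = \langle Sx,x\rangle \leq \lambda_1$ for unit norm $x$; the paper merely phrases the final step as a contradiction (assuming $\lfloor\lambda_1\rfloor+1$ coefficients equal $1$) whereas you argue directly that the integer $k\leq\lambda_1$ forces $k\leq\lfloor\lambda_1\rfloor$. The Cauchy--Schwarz aside is, as you note, unnecessary, but nothing in the argument is missing.
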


\begin{proof}
By definition, 
$\sum\limits_{i=1}^{N}|\langle x,\varphi_i\rangle|^{2}
\leq\lambda_{1}\|x\|^2=\lambda_{1}$. 
Suppose for the moment that $a_{i}=1$ for every 
$i\in\{1,\ldots,\lfloor\lambda_{1}\rfloor+1\}$, then 
\begin{align*}
\lambda_{1}
&\geq
\sum\limits_{i=1}^{N}|\langle x,\varphi_i\rangle|^{2}
=
\sum\limits_{i=1}^{\lfloor\lambda_{1}\rfloor+1}
|\langle x,\varphi_i\rangle|^{2}
+
\sum\limits_{i=\lfloor\lambda_{1}\rfloor+2}^{N}
|\langle x,\varphi_i\rangle|^{2}
\nonumber\\[2 pt]
&=
(\lfloor\lambda_{1}\rfloor+1)
+
\sum\limits_{i=\lfloor\lambda_{1}\rfloor+2}^{N}
|\langle x,\varphi_i\rangle|^{2}
>
\lambda_{1}, 
\end{align*}
which is a contradiction.
\end{proof}

We are ready to give one of the main majorization results
for frame coefficients of unit norm vectors.

\begin{theorem}
Let $\{\varphi_i\}_{i=1}^{N}$ be a frame for $\HH_{M}$ with frame 
bounds $A\leq B$ and let 
\begin{equation*}
b:=\left(\frac{A}{N},\frac{A}{N},\ldots,\frac{A}{N}\right)
\in\mathbb{R}^N.
\end{equation*}
For any unit norm $x\in\HH_{M}$, define $a:=(a_i)_{i=1}^{N}\in\mathbb{R}^{N}$ to be 
$(|\langle x,\varphi_i\rangle|^2)_{i=1}^{N}$ arranged in 
non-increasing order. Then $a\succ_W b$.
In particular, $a_{1}\geq A/N$.

Moreover, if there exists an $m\in\{1,\ldots N\}$ such that 
$\sum\limits_{i=1}^{m}a_{i}=m(A/N)$ then 
$a_{i}=A/N$ for all $i\in\{m+1,\ldots,N\}$.
\end{theorem}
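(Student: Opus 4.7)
The plan is to prove the weak majorization inequality $\sum_{i=1}^{k}a_i\geq k(A/N)$ for every $k\in\{1,\ldots,N\}$ by combining two ingredients: the lower frame bound, which controls the full sum $\sum_{i=1}^{N}a_i$ from below, and the fact that $a$ has been rearranged in non-increasing order, which lets the top-$k$ partial sum dominate a fixed fraction of the total.

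First I would observe that, since $x$ has unit norm, the lower frame bound gives
\begin{equation*}
\sum_{i=1}^{N}a_{i}=\sum_{i=1}^{N}|\langle x,\varphi_{i}\rangle|^{2}\geq A\|x\|^{2}=A.
\end{equation*}
Next, because $a_{1}\geq a_{2}\geq\cdots\geq a_{N}\geq 0$, a standard averaging argument yields
\begin{equation*}
\sum_{i=1}^{k}a_{i}\geq\frac{k}{N}\sum_{i=1}^{N}a_{i}
\quad\text{for every }k\in\{1,\ldots,N\};
\end{equation*}
indeed, $(N-k)\sum_{i=1}^{k}a_{i}\geq k\sum_{i=k+1}^{N}a_{i}$ follows by pairing each of the $k(N-k)$ cross-terms and using monotonicity. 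Combining these two inequalities gives
\begin{equation*}
\sum_{i=1}^{k}a_{i}\geq\frac{k}{N}\cdot A=\sum_{i=1}^{k}b_{i},
\end{equation*}
so $a\succ_{W}b$. The $k=1$ case is exactly the claim $a_{1}\geq A/N$.

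Finally, for the moreover part I would invoke Proposition \ref{ENTFAA1.6} directly, with the role of the constant $A$ there played by $A/N$ here: since $a$ is non-negative and non-increasing, $a\succ_{W}b$, and $\sum_{i=1}^{m}a_{i}=m(A/N)$, the proposition yields $a_{i}=A/N$ for every $i\in\{m+1,\ldots,N\}$. No step looks truly delicate; the only thing to be careful about is that the lower frame bound is applied to the full sum before it is truncated, so that $\sum_{i=1}^{N}a_{i}\geq A=Nb_{1}$ is in place, which is exactly what drives the contradiction inside Proposition \ref{ENTFAA1.6} when any $a_{i}$ with $i>m$ falls below $A/N$.
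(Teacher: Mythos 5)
Your proof is correct. It rests on the same two ingredients as the paper's argument --- the lower frame bound applied to the full sum, $\sum_{i=1}^{N}a_i\geq A\|x\|^2=A$, and the non-increasing rearrangement --- but you organize them differently. The paper argues by contradiction: if $\sum_{i=1}^{m}a_i<mA/N$ then $a_m<A/N$, hence the tail satisfies $\sum_{i=m+1}^{N}a_i\leq(N-m)a_m<(N-m)A/N$, and the total falls strictly below $A$. You instead isolate the rearrangement fact as a standalone averaging inequality $\sum_{i=1}^{k}a_i\geq\frac{k}{N}\sum_{i=1}^{N}a_i$, proved by the pairing argument (which is valid: each of the $k(N-k)$ cross-comparisons $a_i\geq a_j$ for $i\leq k<j$ gives $(N-k)\sum_{i=1}^{k}a_i\geq k\sum_{j=k+1}^{N}a_j$, and adding $k\sum_{i=1}^{k}a_i$ to both sides finishes it), and only then compose it with the frame bound. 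Logically the two routes are essentially contrapositives of one another, but yours is more modular and in fact yields the marginally stronger statement that the top-$k$ partial sum dominates $\frac{k}{N}\sum_{i=1}^{N}|\langle x,\varphi_i\rangle|^2$, which exceeds $kA/N$ whenever $x$ is not a minimizing direction for the frame operator. Your handling of the moreover part, invoking Proposition~\ref{ENTFAA1.6} with the constant there taken to be $A/N$, is exactly what the paper does.
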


\begin{proof}
We proceed by way of contradiction.  If there is an 
$m\in\{1,\ldots,N\}$ so that 
\begin{equation*}
\sum\limits_{i=1}^{m}a_{i}
<
\sum_{i=1}^{m}\frac{A}{N}
=
m\frac{A}{N},
\end{equation*}
then $a_{m}<A/N$. Furthermore, it follows that 
\begin{equation*}
a_{i}\leq a_{m},\;\;\forall\,i\in\{m,\ldots,N\},
\end{equation*}
and we may write for any unit vector $x\in\HH_{M}$ 
\begin{align*}
A
&=
A\|x\|^{2}
\leq
\sum\limits_{i=1}^{N}a_{i}
=
\sum\limits_{i=1}^{m}a_{i}+\sum\limits_{i=m+1}^{N}a_{i}
\nonumber\\[2 pt]
&<
m\frac{A}{N}+(N-m)a_{m}
<
m\frac{A}{N}+(N-m)\frac{A}{N}
\nonumber\\[2 pt]
&=
A,
\end{align*}
a contradiction.

{\it The moreover part} follows from 
Proposition~\ref{ENTFAA1.6}.
\end{proof}

\begin{corollary}
Let $\{\varphi_i\}_{i=1}^N$ be a unit norm tight frame in 
$\HH_{M}$. For 
any unit norm $x\in\HH_{M}$, if 
$a:=(a_i)_{i=1}^{N}\in\mathbb{R}^{N}$ is
$(|\langle x,\varphi_{i}\rangle|^2)_{i=1}^{N}$ arranged in 
non-increasing order, and we set 
\begin{equation*}
b:=\Big(\frac{1}{M},\frac{1}{M},\ldots,\frac{1}{M}\Big)
\in\mathbb{R}^N.
\end{equation*}
Then $a\succ_{W}b$.
\end{corollary}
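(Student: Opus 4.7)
The plan is to deduce this directly from the preceding theorem together with the earlier proposition that pins down the frame bound of a unit norm tight frame.

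First I would invoke the proposition from Section~\ref{sec2} stating that if $\{\varphi_i\}_{i=1}^N$ is a unit norm tight frame for $\HH_M$, then the (common) frame bound satisfies $A = N/M$. This identifies the constant appearing in the majorizing vector from the previous theorem.

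Next I would apply the previous theorem verbatim: for any unit norm $x \in \HH_M$, if $a := (a_i)_{i=1}^N$ is $(|\langle x, \varphi_i\rangle|^2)_{i=1}^N$ arranged in non-increasing order, then $a \succ_W (A/N, A/N, \ldots, A/N)$. Substituting $A = N/M$ gives $A/N = 1/M$, so the majorizing vector is precisely the $b$ defined in the statement, and we obtain $a \succ_W b$.

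There is essentially no obstacle here; the only thing to verify is that the hypotheses of the earlier theorem are met, namely that $\{\varphi_i\}_{i=1}^N$ is a frame for $\HH_M$ with some frame bound $A$, which is immediate from the definition of a unit norm tight frame. Thus the corollary follows as a one-line substitution into the preceding majorization theorem.
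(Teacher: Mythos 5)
Your proposal is correct and is exactly the argument the paper intends: the corollary is stated without proof precisely because it is the preceding theorem specialized via the Section~\ref{sec2} proposition that a unit norm tight frame has frame bound $A = N/M$, whence $A/N = 1/M$. Nothing is missing.
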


Given a frame $\{\varphi_{i}\}_{i=1}^{N}$ and a unit vector $x \in \HH_{M}$, if $x$ is orthogonal to a number of frame vectors, we would expect the nonzero coefficients to 
start to grow, and they indeed do as we shall see in the following 
proposition. 

\begin{proposition}
Let $\{\varphi_{i}\}_{i=1}^{N}$ be a frame for $\HH_{M}$ with 
frame bounds $A,B$. For a given unit norm $x\in \HH_M$, define $I:=\{i\in [N]:x\perp \phi_i\}$ and let $|I|=K$.
Finally, let
$a:=(a_{i})_{i=1}^{N}\in\mathbb{R}^{N}$ be 
$(|\langle x,\varphi_{i}\rangle|^{2})_{i=1}^{N}$ arranged in 
non-increasing order and set
\begin{equation*}
b
:=
\Big(
\underbrace{\frac{A}{N-K},\ldots,\frac{A}{N-K}}
_{N-K\;\mbox{terms}},
\underbrace{0,\ldots,0}_{K\;\mbox{terms}}\Big)
\in\mathbb{R}^{N}.
\end{equation*}
Then $a\succ_{W}b$.
\end{proposition}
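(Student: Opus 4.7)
The plan is to mimic the proof of the preceding theorem, exploiting the observation that the $K$ vanishing frame coefficients make the problem reduce essentially to a weighted weak majorization on $N-K$ positions. Let me set the stage: since $|\langle x,\varphi_i\rangle|^2 = 0$ for exactly the $K$ indices $i \in I$, rearranging in non-increasing order puts all of these zeros at the end, so $a_{N-K+1}=a_{N-K+2}=\cdots=a_N=0$.

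Next I would split the verification of $\sum_{i=1}^k a_i \geq \sum_{i=1}^k b_i$ into the cases $k \leq N-K$ and $k > N-K$. The case $k > N-K$ is essentially free: both partial sums equal their full $k=N-K$ value, since $a_i = 0 = b_i$ for $i > N-K$, so this case reduces to the case $k=N-K$. Thus it suffices to handle $k \in \{1,\ldots,N-K\}$.

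For $k \leq N-K$ I would argue by contradiction, exactly in the spirit of the previous theorem. Suppose $\sum_{i=1}^k a_i < k\,A/(N-K)$ for some such $k$. Since $a_1,\ldots,a_k$ is non-increasing, this forces $a_k < A/(N-K)$, and monotonicity then gives $a_i \leq a_k < A/(N-K)$ for every $i\in\{k,\ldots,N-K\}$. Using $a_{N-K+1}=\cdots=a_N=0$ and the lower frame bound, I would then estimate
\begin{align*}
A \;=\; A\|x\|^2 \;\leq\; \sum_{i=1}^N a_i
&= \sum_{i=1}^k a_i + \sum_{i=k+1}^{N-K} a_i \\
&< k\,\frac{A}{N-K} + (N-K-k)\,\frac{A}{N-K} \;=\; A,
\end{align*}
which is the desired contradiction (with the convention that an empty sum vanishes when $k=N-K$).

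I do not see any real obstacle here; the slightly delicate point is just to make sure the last $K$ terms of $a$ really are zero (so that the tail contributes nothing to $\sum_i a_i$) and to handle the $k>N-K$ range correctly by observing that $b$ is supported on the first $N-K$ coordinates. Both are immediate from the hypothesis $|I|=K$ and the definition of $b$.
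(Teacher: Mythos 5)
Your proposal is correct and follows essentially the same route as the paper: place the $K$ zero coefficients at the end, reduce weak majorization to the indices $k\le N-K$, and derive a contradiction from the lower frame bound exactly as in the preceding theorem. The only cosmetic difference is that you merge the paper's two successive strict inequalities into one displayed chain, which changes nothing.
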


\begin{proof}
Without loss of generality, assume $I=\{N-K+1,\ldots,N\}$ 
(if not, apply a permutation to the indices $\{1,\ldots,N\}$) and thus, ${I^{c}=\{1,\ldots,N-K\}}$. 
Furthermore, since $(a_{i})_{i=1}^{N}$ is in non-increasing 
order and $x\perp\varphi_{i}$ for each $i\in I$, then $a_{i}=0$ 
for all $i\in I$. Being that $a_{i}\geq0$ for all 
$i\in\{1,\ldots,N\}$, it suffices to show 
\[\sum\limits_{i=1}^{m}a_{i}\geq
\sum\limits_{i=1}^{m}\frac{A}{N-K}\] 
for all $m\in\{1,\ldots,N-K\}$. We proceed by way of 
contradiction.  If there exists $m\in\{1,\ldots,N-K\}$ so that
\begin{equation*}
\sum\limits_{i=1}^{m}a_{i}
<
\sum\limits_{i=1}^{m}\frac{A}{N-K}
=
m\frac{A}{N-K},
\end{equation*}
then $\displaystyle a_{m}<\frac{A}{N-K}$. Moreover, it follows that 
\begin{equation*}
a_{i}\leq a_{m},\;\;\forall\,i\in\{m,\ldots,N-K\},
\end{equation*}
and we may write for any unit vector $x\in\HH_{M}$
\begin{align*}
A
&=
A\|x\|^{2}
\leq
\sum\limits_{i=1}^{N}a_{i}
=
\sum\limits_{i\in I^{c}}a_{i}+\sum\limits_{i\in I}a_{i}
=
\sum\limits_{i\in I^{c}}a_{i}
=
\sum\limits_{i=1}^{m}a_{i}+\sum\limits_{i=m+1}^{N-K}a_{i}
\nonumber\\[2 pt]
&<
\sum\limits_{i=1}^{m}\frac{A}{N-k}+\sum\limits_{i=m+1}^{N-K}a_{m}
=
m\frac{A}{N-K}+(N-K-m)a_{m}
\nonumber\\[2 pt]
&<
m\frac{A}{N-K}+(N-K-m)\frac{A}{N-K}
=
A,
\end{align*}
a contradiction.
\end{proof}


\section{Products of frame coefficients}

In this section, we will investigate the following: given a frame $\{\varphi_i\}_{i=1}^N$ for $\HH_M$, what can be said about the quantity
\begin{equation}\label{ENTFAA2.2.1}
\min\limits_{\|x\|=1=\|y\|}\sum\limits_{i=1}^{N}
|\langle x,\varphi_{i}\rangle||\langle y,\varphi_{i}\rangle|?
\end{equation}

We start with a few basic observations.

\begin{observation}
If $M\leq 2N-2$ we can always choose $x$ and $y$ such that 
$x\perp\varphi_{i}$ for all $i\in I_{x}\subset\{1,\dots,N\}$, 
$|I_{x}|\le N-1$, and $y\perp\varphi_{i}$ for all 
$i\in I_{y}\subset\{1,\dots,N\}$, $|I_{y}|\le N-1$, so that 
$I_{x}\cap I_{y}=\emptyset$ making the quantity \eqref{ENTFAA2.2.1} zero.
\end{observation}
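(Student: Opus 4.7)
The plan is to exploit the index structure directly: partition $\{1,\ldots,N\}$ as a disjoint union $I_x \sqcup I_y$ with both parts of size strictly less than $M$, then select a unit vector $x$ orthogonal to every $\varphi_i$ with $i\in I_x$ and a unit vector $y$ orthogonal to every $\varphi_i$ with $i\in I_y$. Since each index falls into exactly one of the two parts, for every $i$ either $\langle x,\varphi_i\rangle=0$ or $\langle y,\varphi_i\rangle=0$, which forces every term of the sum in \eqref{ENTFAA2.2.1} to vanish.

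Concretely, I would pick an integer $k$ satisfying $\max\{1,N-M+1\}\leq k\leq \min\{N-1,M-1\}$ and set $I_x:=\{1,\ldots,k\}$ and $I_y:=\{k+1,\ldots,N\}$. Such a $k$ exists provided $N\leq 2M-2$ (with $M\geq 2$), and then both $|I_x|\leq M-1$ and $|I_y|\leq M-1$. Because $|I_x|\leq M-1$, the subspace $\mathrm{span}\{\varphi_i:i\in I_x\}$ is a proper subspace of $\HH_M$, so its orthogonal complement is nontrivial and contains a unit vector $x$; the identical reasoning applied to $I_y$ produces a unit $y$ perpendicular to every $\varphi_i$ with $i\in I_y$. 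For each $i\in\{1,\ldots,N\}$ exactly one of $i\in I_x$ or $i\in I_y$ holds, so at least one of $\langle x,\varphi_i\rangle$, $\langle y,\varphi_i\rangle$ is zero, the product $|\langle x,\varphi_i\rangle|\,|\langle y,\varphi_i\rangle|$ vanishes, and summing on $i$ yields $0$.

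There is no substantive obstacle beyond this elementary dimension count. The one point I would flag is that the hypothesis written as $M\leq 2N-2$ appears to be the reverse of what the partition construction actually requires, namely $N\leq 2M-2$ (so that $|I_x|+|I_y|=N\leq 2(M-1)$ can be achieved with both parts avoiding spanning $\HH_M$); under this latter, and I believe intended, reading the argument above goes through without any further input.
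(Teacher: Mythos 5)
The paper states this as an Observation with no proof at all, so there is nothing to compare against; your partition-and-orthogonal-complement argument is correct, complete, and surely the intended one. You are also right to flag the hypothesis: as written, $M\leq 2N-2$ is essentially vacuous for a frame (it holds whenever $M\geq 2$, since $N\geq M$), whereas the construction genuinely requires $N\leq 2M-2$ so that $\{1,\dots,N\}$ can be split into two disjoint blocks each of size at most $M-1$, each therefore spanning a proper subspace of $\HH_{M}$ with a nontrivial orthogonal complement. This reading is confirmed by Proposition~\ref{ENTFAA2.6}, which treats exactly the complementary range $N\geq 2M-1$; correspondingly, the cardinality bounds $|I_{x}|,|I_{y}|\leq N-1$ in the statement should read $\leq M-1$. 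With those corrections your dimension count goes through exactly as you describe.
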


\begin{observation}
If we have a Hilbert space 
$\HH_{M_{1}}\oplus\HH_{M_{2}}$ and vectors 
$\{\varphi_{i}\}_{i\in I}$ in $\HH_{M_{1}}$ and 
$\{\varphi_{i}\}_{i\in J}$ in $\HH_{M_{2}}$, then by choosing 
$x\in\HH_{M_{1}}$ and $y\in\HH_{M_{2}}$ unit norm, we have
\begin{equation*}
\sum\limits_{i\in I\cup J}
|\langle x,\varphi_{i}\rangle||\langle y,\varphi_{i}\rangle|=0.
\end{equation*}
\end{observation}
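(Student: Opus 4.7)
The plan is to exploit the orthogonality built into the direct sum $\HH_{M_1}\oplus\HH_{M_2}$. Viewed inside this ambient Hilbert space, $\HH_{M_1}$ and $\HH_{M_2}$ are mutually orthogonal subspaces, so any vector lying in one component is orthogonal to every vector lying in the other. I would first make this observation explicit and then reduce the single sum over $I\cup J$ to two separate sums, treating the index sets as disjoint labels for the two subfamilies.

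Next, I would argue termwise. For each $i\in I$ the frame vector $\varphi_i$ belongs to $\HH_{M_1}$, while $y\in\HH_{M_2}$; therefore $\langle y,\varphi_i\rangle=0$, which forces the product $|\langle x,\varphi_i\rangle||\langle y,\varphi_i\rangle|$ to vanish. Symmetrically, for each $i\in J$ one has $\varphi_i\in\HH_{M_2}$ and $x\in\HH_{M_1}$, so $\langle x,\varphi_i\rangle=0$ and the corresponding summand is again zero. Summing over $I\cup J$ then gives
\[
\sum_{i\in I\cup J}|\langle x,\varphi_i\rangle||\langle y,\varphi_i\rangle|
=\sum_{i\in I}|\langle x,\varphi_i\rangle|\cdot 0+\sum_{i\in J}0\cdot|\langle y,\varphi_i\rangle|=0.
\]

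There is no real obstacle here: the statement is essentially a direct unpacking of the definition of the orthogonal direct sum, and the only thing to be careful about is making clear that $x$ and $y$ are regarded as elements of $\HH_{M_1}\oplus\HH_{M_2}$ via the canonical isometric embeddings of each summand, so that the inner products on the right-hand side make sense and the cross inner products indeed vanish.
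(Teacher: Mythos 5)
Your argument is correct and is exactly the reasoning the paper intends: the observation is stated without proof precisely because it reduces to the fact that cross inner products between the orthogonal summands $\HH_{M_1}$ and $\HH_{M_2}$ vanish, which is what you spell out termwise. Nothing is missing.
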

In light of the above observations, nothing can be said about \eqref{ENTFAA2.2.1} \emph{in general}. However, there are cases where we can produce meaningful results.
We proceed analogously to the preceding sections in that we begin by estimating the number of non-zero summands. That is, we wish to know for how many $i \in [N]$ do we have
$|\langle x,\varphi_i\rangle||\langle y,\varphi_i\rangle|
\not= 0$. 
\begin{definition}
	Let $\Phi=\{\varphi_{i}\}_{i=1}^N$ be a frame for $\HH_M$. We say that $\Phi$
	\begin{enumerate}[(i)]
		\item is {\bf full spark} if for every $I\subset [N]$, $\{\varphi_i\}_{i\in I}$, with $|I|=M$ spans $\HH_M$,
		\item has the {\bf complement property} if for every $I\subset [N]$, with $|I|=M$ either
$\{\varphi_i\}_{i\in I}$ or $\{\varphi_i\}_{i\in I^c}$ 
spans $\HH_M$.
	\end{enumerate}
\end{definition}

\begin{proposition}\label{ENTFAA2.6}
Let $\{\varphi_{i}\}_{i=1}^{N}$ be a frame for $\HH_{M}$ with 
$N\geq2M-1$. The following are equivalent.

\begin{enumerate}[(i)]
\item $\{\varphi_{i}\}_{i=1}^{N}$ has the complement property.
\item For every $x,y\in\HH_{M}$ we have
$\sum\limits_{i=1}^{N}
|\langle x,\varphi_{i}\rangle||\langle y,\varphi_{i}\rangle|
\neq0$.
\end{enumerate}
Moreover, if $\{\varphi_{i}\}_{i=1}^{N}$ is full spark, then 
\begin{equation*}
\big|\{i:|\langle x,\varphi_{i}\rangle|
|\langle y,\varphi_{i}\rangle|\neq0\}\big|
\geq
N-(2M-2).
\end{equation*}
\end{proposition}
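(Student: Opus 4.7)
The plan is to recast condition (ii) as a statement about the zero-sets of the frame coefficients. For nonzero $x,y\in\HH_M$, the nonnegative sum $\sum_{i=1}^{N}|\langle x,\varphi_i\rangle||\langle y,\varphi_i\rangle|$ vanishes iff for every $i\in[N]$ at least one of the two factors is zero. Introducing $Z_x:=\{i\in[N]:\langle x,\varphi_i\rangle=0\}$ and $Z_y:=\{i\in[N]:\langle y,\varphi_i\rangle=0\}$, this happens iff $Z_x\cup Z_y=[N]$. Condition (ii) should be read as holding for all nonzero $x,y$, since for $x=0$ the sum is trivially zero.

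For (ii)$\Rightarrow$(i), I argue contrapositively: if the complement property fails, there exists $I\subset[N]$ for which neither $\{\varphi_i\}_{i\in I}$ nor $\{\varphi_i\}_{i\in I^c}$ spans $\HH_M$. Pick nonzero $y$ in the orthogonal complement of $\mathrm{span}\{\varphi_i\}_{i\in I}$ and nonzero $x$ in the orthogonal complement of $\mathrm{span}\{\varphi_i\}_{i\in I^c}$, so that $Z_y\supset I$ and $Z_x\supset I^c$, which forces the sum in (ii) to vanish. For (i)$\Rightarrow$(ii), I argue in the other direction: if the sum vanishes for some nonzero $x,y$, set $I:=Z_x^c$. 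Then $I\subset Z_y$ because $Z_x\cup Z_y=[N]$, so $y\neq 0$ witnesses that $\{\varphi_i\}_{i\in I}$ fails to span and $x\neq 0$ witnesses the same for $\{\varphi_i\}_{i\in I^c}=\{\varphi_i\}_{i\in Z_x}$, violating the complement property. The hypothesis $N\geq 2M-1$ is what makes the complement property a meaningful condition on both sides of the partition.

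For the moreover part I invoke full spark. If $u\neq 0$, every $\varphi_i$ with $\langle u,\varphi_i\rangle=0$ lies in the hyperplane $u^\perp$, which has dimension $M-1$; since any $M$ members of $\Phi$ span $\HH_M$, at most $M-1$ of them can lie in $u^\perp$, so $|Z_u|\leq M-1$. Applying this to $x$ and $y$ gives $|Z_x\cup Z_y|\leq|Z_x|+|Z_y|\leq 2M-2$, and $|\langle x,\varphi_i\rangle||\langle y,\varphi_i\rangle|\neq 0$ precisely when $i\notin Z_x\cup Z_y$, which yields the lower bound $N-(2M-2)$.

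The argument is essentially a bookkeeping translation between linear-algebraic orthogonality and the vanishing of the nonnegative sum, so I do not expect any serious obstacle. The only subtlety is keeping track of which of $x,y$ annihilates which side of the partition; the choice $I=Z_x^c$ (so that $I\subset Z_y$) is what keeps the two orthogonality witnesses cleanly separated across $I$ and $I^c$.
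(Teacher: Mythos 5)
Your proof is correct and follows essentially the same contrapositive argument as the paper in both directions, with the choice $I=Z_x^c$ neatly avoiding the paper's two-case split on $I\cap J$. Your moreover part is in fact slightly more careful than the paper's: you correctly bound each zero set by $M-1$ (the paper writes $\leq M$, which would only give $N-2M$), and this is exactly what the stated bound $N-(2M-2)$ requires.
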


\begin{proof}\hfill

(i)$\Rightarrow$(ii): We will prove the contrapositive. Suppose there exist nonzero $x,y\in\HH_{M}$ such that 
$\sum\limits_{i=1}^{N}|\langle x,\varphi_{i}\rangle|
|\langle y,\varphi_{i}\rangle|=0$ and define
\begin{equation*}
\begin{array}{l}
I:=\{1\leq i\leq N:\langle x,\varphi_{i}\rangle=0\},
\\[4 pt]
J:=\{1\leq i\leq N:\langle y,\varphi_{i}\rangle=0\}.
\end{array}
\end{equation*}
\noindent\underline{Case 1:}
If $I\cap J=\emptyset$, then $J=I^{c}$. Furthermore, 
$x\perp\varphi_{i}$ for all $i \in I$ and thus 
${\rm span}(\{\varphi_{i}\}_{i\in I})\neq\HH_{M}$. Also, 
$y\perp\varphi_{i}$ for all $i\in J=I^{c}$ and thus 
${\rm span}(\{\varphi_{i}\}_{i \in I^{c}})\neq\HH_{M}$. Therefore 
there exists a partition $I,I^{c}$ of 
$\{\varphi_{i}\}_{i=1}^{N}$ for which neither set spans 
$\HH_{M}$. Hence, $\{\varphi_{i}\}_{i=1}^{N}$ fails the complement 
property.

\noindent\underline{Case 2:}
If $I\cap J\neq\emptyset$, then 
$\big(J\setminus(I\cap J)\big)=I^{c}$. Furthermore, 
$x\perp\varphi_{i}$ for all $i\in I$ and thus 
${\rm span}(\{\varphi_{i}\}_{i \in I})\neq\HH_{M}$. Also, 
$y\perp\varphi_{i}$ for all 
$i\in\big(J\setminus(I\cap J)\big)=I^{c}$ and thus 
${\rm span}(\{\varphi_{i}\}_{i\in I^{c}})\neq\HH_{M}$. Therefore 
there exists a partition $I,I^{c}$ of $\{\varphi_{i}\}_{i=1}^{N}$ 
for which neither set spans $\HH_{M}$. Thus, 
$\{\varphi_{i}\}_{i=1}^{N}$ fails the complement property.

\noindent (ii)$\Longrightarrow$(i): Again by contrapositive. Suppose $\{\varphi_{i}\}_{i=1}^{N}$ fails the 
complement property. This implies there exists a partition 
$I,I^{c}$ of $\{1,\dots,N\}$ such that 
${\rm span}(\{\varphi_{i}\}_{i\in I})\neq\HH_{M}$ and 
${\rm span}(\{\varphi_{i}\}_{i\in I^{c}})\neq\HH_{M}$. Thus, 
there exist $x,y\in\HH_{M}$ with the property that 
$x\perp\{\varphi_{i}\}_{i\in I}$ and 
$y\perp\{\varphi_{i}\}_{i\in I^{c}}$. Therefore 
$\sum\limits_{i=1}^{N}|\langle x,\varphi_{i}\rangle|
|\langle y,\varphi_{i}\rangle|=0$, 
as wanted.

For the moreover part, by full spark we have 
\begin{equation*}
\big|\{i:|\langle x,\varphi_{i}\rangle|=0\}\big|\leq M
\qquad\mbox{and}\qquad
\big|\{i:|\langle y,\varphi_{i}\rangle|=0\}\big|\leq M,
\end{equation*}
as wanted.
\end{proof}

We digress for a moment to point out an interesting connection to the problem of phaseless signal reconstruction. Introduced in \cite{PHASELESS}, Balan, Casazza and Ediden proved necessary and sufficient conditions for a frame to do phaseless reconstruction. That is, if we define the map 
\[A:\RR^M/\{\pm1\}\to \RR^N_{\geq 0}\]
by
\[A:x\mapsto (|\ip{x}{\varphi_i}|^2),\]
under what circumstances is this map injective? The authors demonstrate that $A$ is injective if and only if for every subset $I\subset[N]$, we have that $\{\varphi_i\}_{i\in I}$ or $\{\varphi_i\}_{i\in I^c}$ spans. This is equivalent to (ii) in Proposition~\ref{ENTFAA2.6}.

Next, we give an upper bound on the sum in equation~\eqref{ENTFAA2.2.1}.

\begin{lemma}\label{ENTFAA3.1}
Let $\{\varphi_{i}\}_{i=1}^{N}$ be a frame in $\HH_{M}$ with 
frame bounds $A,B$. Then 
\begin{equation*}
\sup\Big\{\sum\limits_{i=1}^{N}
|\langle x,\varphi_{i}\rangle|
|\langle y,\varphi_{i}\rangle|:x,y\in\HH_{M},\|x\|=1=\|y\|\Big\}
\leq
B.
\end{equation*}
\end{lemma}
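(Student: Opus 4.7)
The plan is a one-line application of Cauchy--Schwarz followed by the upper frame bound. Specifically, for any unit vectors $x, y \in \HH_M$, view the sum as an inner product in $\ell^2_N$ of the nonnegative sequences $(|\langle x, \varphi_i\rangle|)_{i=1}^N$ and $(|\langle y, \varphi_i\rangle|)_{i=1}^N$, and estimate
\begin{equation*}
\sum_{i=1}^N |\langle x, \varphi_i\rangle|\,|\langle y, \varphi_i\rangle| \leq \Big(\sum_{i=1}^N |\langle x, \varphi_i\rangle|^2\Big)^{1/2}\Big(\sum_{i=1}^N |\langle y, \varphi_i\rangle|^2\Big)^{1/2}.
\end{equation*}

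Next, I would invoke the upper frame bound on each factor separately: since $\|x\| = \|y\| = 1$, the frame inequality gives $\sum_i |\langle x,\varphi_i\rangle|^2 \leq B\|x\|^2 = B$ and likewise for $y$. Combining the two estimates yields the bound $B$, and since the estimate is uniform in the choice of unit vectors $x, y$, the supremum is bounded by $B$ as claimed.

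There is no real obstacle here; the only thing to notice is that Cauchy--Schwarz must be applied to the sequences of moduli (not the signed coefficients themselves, which in the complex case would only give an upper bound on $|\sum_i \langle x,\varphi_i\rangle \overline{\langle y,\varphi_i\rangle}|$ rather than on the sum of products of moduli). Since moduli are nonnegative, this is a valid application of the scalar Cauchy--Schwarz inequality in $\RR^N$.
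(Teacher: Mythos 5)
Your proof is correct and follows essentially the same route as the paper: the paper also bounds the sum of products of moduli by the product of the two $\ell^2$ norms (it cites H\"older's inequality, which with exponents $p=q=2$ is exactly your Cauchy--Schwarz step) and then applies the upper frame bound to each factor. Your remark about applying the inequality to the moduli rather than the signed/complex coefficients is a sound observation, though it does not change the substance of the argument.
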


\begin{proof}
Let $x,y\in\HH_{M}$ be unit norm. Invoking H\"older's inequality 
gives
\begin{align*}
\sum\limits_{i=1}^{N}
|\langle x,\varphi_{i}\rangle|
|\langle y,\varphi_{i}\rangle|
&\leq
\Big(\sum\limits_{i=1}^{N}
|\langle x,\varphi_{i}\rangle|^{2}\Big)^{1/2}
\Big(\sum\limits_{i=1}^{N}
|\langle y,\varphi_{i}\rangle|^{2}\Big)^{1/2}
\nonumber\\[2 pt]
&\leq
B^{1/2}B^{1/2}
=
B.
\end{align*}
\end{proof}

The following proposition concerns another upper 
bound estimate for the summation in \eqref{ENTFAA2.2.1}. It should 
be noted that the proposition is neither a stronger nor weaker version of Lemma~\ref{ENTFAA3.1}. This can be seen by 
making particular choices of $N,M$ which make the bounds in the lemma better than the bounds in the proposition and vice versa. 

\begin{proposition}\label{ENTFAA3.32}
Let $N\geq 2M-1$ and let $\{\varphi_{i}\}_{i=1}^{N}$ be a frame 
for $\HH_{M}$ with frame bounds $A,B$. Then
\begin{equation*}
\sup\limits_{\|x\|=1=\|y\|}
\Big\{\sum\limits_{i=1}^{N}
|\langle x,\varphi_{i}\rangle||\langle y,\varphi_{i}\rangle|\Big\}
\leq
(N-2M+2)\sqrt{\frac{B}{M}}.
\end{equation*}
\end{proposition}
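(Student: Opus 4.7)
The plan is to use the upper frame bound $B$ to constrain where the coefficients $|\langle x,\varphi_i\rangle|$ and $|\langle y,\varphi_i\rangle|$ can be large, in a way that mirrors the counting in Proposition~\ref{ENTFAA2.6}. Write $\alpha_i:=|\langle x,\varphi_i\rangle|$ and $\beta_i:=|\langle y,\varphi_i\rangle|$; the upper frame bound yields $\sum_{i=1}^N\alpha_i^2\leq B$ and similarly $\sum_{i=1}^N\beta_i^2\leq B$.

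Rearrange $(\alpha_i^2)$ in non-increasing order as $\alpha_{(1)}^2\geq\cdots\geq\alpha_{(N)}^2$. Then $B\geq\sum_{i=1}^N\alpha_i^2\geq\sum_{j=1}^M\alpha_{(j)}^2\geq M\alpha_{(M)}^2$, which forces $\alpha_{(M)}\leq\sqrt{B/M}$. In particular at most $M-1$ indices $i$ satisfy $\alpha_i>\sqrt{B/M}$, and by the same argument at most $M-1$ indices satisfy $\beta_i>\sqrt{B/M}$. Consequently, the set $T:=\{i\in[N]:\alpha_i\leq\sqrt{B/M}\text{ and }\beta_i\leq\sqrt{B/M}\}$ has $|T|\geq N-2(M-1)=N-2M+2$, and the standing hypothesis $N\geq 2M-1$ guarantees $|T|\geq 1$. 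Note the parallel with Proposition~\ref{ENTFAA2.6}: the same quantity $N-2M+2$ is the lower bound on the number of "well-behaved" indices.

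I would then split the target as $\sum_i\alpha_i\beta_i=\sum_{i\in T}\alpha_i\beta_i+\sum_{i\notin T}\alpha_i\beta_i$. On $T$ one factor in each summand is at most $\sqrt{B/M}$, so bundling this threshold out gives $\sum_{i\in T}\alpha_i\beta_i\leq \sqrt{B/M}\sum_{i\in T}\min(\alpha_i,\beta_i)$, and Cauchy--Schwarz on $T$ combined with $\sum_{i\in T}\min(\alpha_i,\beta_i)^2\leq\min(\sum\alpha_i^2,\sum\beta_i^2)\leq B$ produces a bound of the form $\sqrt{(N-2M+2)B/M}\cdot\sqrt{B}$, which is already in the flavour of $(N-2M+2)\sqrt{B/M}$.

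The principal obstacle I anticipate lies in the at most $2M-2$ \emph{bad} indices outside $T$, where either $\alpha_i$ or $\beta_i$ can be as large as $\sqrt{B}$ and the naive estimate $\alpha_i\beta_i\leq B$ overshoots the target. The remedy should come from the fact that placing mass on bad indices exhausts the budgets $\sum\alpha_i^2\leq B$ and $\sum\beta_i^2\leq B$, forcing the complementary factor to shrink; equivalently, one can replace the fixed threshold $\sqrt{B/M}$ by a free parameter $t>0$ and optimize $t$ at the very end of the computation. Carrying out this trade-off so that the constants combine into exactly $(N-2M+2)\sqrt{B/M}$ is the delicate technical step, and is where I expect the proof to do its real work.
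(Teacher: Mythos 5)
Your counting of the ``good'' indices is correct: at most $M-1$ indices can have $|\langle x,\varphi_i\rangle|^2>B/M$, likewise for $y$, so at least $N-2M+2$ indices land in your set $T$. But the step you flag as the delicate one --- controlling the at most $2M-2$ bad indices --- is not merely delicate, it is impossible, because the inequality you are trying to prove fails for general unit vectors. Take a unit norm tight frame with $N=2M-1$ and put $x=y$: the left-hand side is $\sum_{i=1}^N|\langle x,\varphi_i\rangle|^2=N/M=B$, while the right-hand side is $(N-2M+2)\sqrt{B/M}=\sqrt{N}/M$, and $N/M>\sqrt{N}/M$ whenever $N>1$. Concretely, for the three equiangular unit vectors in $\RR^2$ the claimed bound reads $3/2\le\sqrt{3}/2$. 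So no choice of threshold $t$ and no trade-off argument can close your gap: the diagonal choice $x=y$ puts all the mass on ``bad'' indices in exactly the way you feared.

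The paper's own proof takes a genuinely different route, and in doing so silently changes the statement. It does not treat arbitrary $x,y$; it \emph{chooses} unit vectors with $x\perp\varphi_1,\dots,\varphi_{M-1}$ and $y\perp\varphi_{N-M+2},\dots,\varphi_N$, so that $2M-2$ terms of the sum vanish outright, and then runs the same pigeonhole count you ran to bound each of the surviving $N-2M+2$ terms by $\sqrt{B/M}$ (also quietly using $|\langle y,\varphi_i\rangle|\le1$, i.e.\ unit norm frame vectors, which is not among the hypotheses). The closing sentence ``taking the supremum over all $x,y$'' does not follow from anything preceding it: what the argument actually establishes is the existence of unit vectors $x,y$ for which $\sum_{i=1}^N|\langle x,\varphi_i\rangle||\langle y,\varphi_i\rangle|\le(N-2M+2)\sqrt{B/M}$, i.e.\ an upper bound on the \emph{infimum} in \eqref{ENTFAA2.2.1}, not on the supremum. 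In short, your approach honestly confronts the obstruction and therefore cannot succeed as stated, while the paper's approach sidesteps the obstruction by restricting to special $x,y$ and then overstates what it has proved.
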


\begin{proof}
Pick unit vectors $x,y\in\HH_{M}$ so that $x\perp\varphi_{i}$ 
for all $i\in\{1,\ldots,M-1\}$ and $y\perp\varphi_{i}$ for all 
$i\in\{N-M+2,\ldots,N\}$. It follows that 
\begin{equation*}
B
=
B\|x\|^{2}
\geq
\sum_{i=1}^{N}|\langle x,\varphi_{i}\rangle|^2
=
\sum_{i=M}^{N}|\langle x,\varphi_{i}\rangle|^2.
\end{equation*}
Suppose momentarily that 
$|\langle x,\varphi_{i}\rangle|^{2}>\frac{B}{M}$ for all 
$i\in\{M,\ldots,N\}$. Then 
\begin{equation}\label{ENTFAA3.35}
B
\geq
\sum\limits_{i=M}^{N}|\langle x,\varphi_{i}\rangle|^{2}
>
(N-M+1)\,\frac{B}{M}
\geq
M\frac{B}{M}
=
B,
\end{equation}
which is a contradiction. Hence, there exists 
$i_{1}\in\{M,\ldots,N\}$ such that 
$|\langle x,\varphi_{i_{1}}\rangle|^{2}\leq\frac{B}{M}$. If 
the third inequality in \eqref{ENTFAA3.35} is strict, then using 
a similar proof by contradiction $($as in \eqref{ENTFAA3.35}$)$ 
at most $N-2M+1$ more times, we obtain there exist indices 
$i_{2},\ldots,i_{N-2M+2}$ all not equal to $i_{1}$ so that 
$|\langle x,\varphi_{i_{j}}\rangle|^{2}\leq\frac{B}{M}$ for 
every $j\in\{1,\ldots,N-2M+2\}$. Without loss of generality, we 
may assume $i_{1}=M, i_{2}=M+1,\ldots,i_{N-2M+2}=N-M+1$. Then
\begin{align}\label{ENTFAA3.36}
\sum\limits_{i=1}^{N}
|\langle x,\varphi_{i}\rangle||\langle y,\varphi_{i}\rangle|
&=
\sum\limits_{i=M}^{N-M+1}
|\langle x,\varphi_{i}\rangle||\langle y,\varphi_{i}\rangle|
\leq
\sum\limits_{i=M}^{N-M+1}
|\langle x,\varphi_{i}\rangle|
\nonumber\\[2 pt]
&\leq
(N-2M+2)\sqrt{\frac{B}{M}}.
\end{align}
Taking the supremum over all $x,y\in\HH_{M}$ so that 
$\|x\|=1=\|y\|$ in \eqref{ENTFAA3.36} gives the desired result.
\end{proof}

\begin{remark} Now we compare the bounds in Lemma~\ref{ENTFAA3.1} with Proposition~\ref{ENTFAA3.32}. Let $N\geq 2M-1$ and let $\{\varphi_i\}_{i=1}^N$ be a unit norm tight frame for $\mathcal{H}_M$. If $N-\sqrt{N}+2 \geq 2M$ then $(N-2M+2)\frac{\sqrt{N}}{M} \geq \frac {N}{M}$ and hence $\frac{N}{M}$ is a better upper bound for 
\begin{equation*}
\sup\limits_{x,y\in\HH_{M},\;\|x\|=1=\|y\|}
\Big\{\sum\limits_{i=1}^{N}
|\langle x,\varphi_{i}\rangle||\langle y,\varphi_{i}\rangle|\Big\}.
\end{equation*}
Moreover, when $N-\sqrt{N}+2 < 2M$ then $(N-2M+2)\frac{\sqrt{N}}{M}< \frac{M}{N}$ and hence $(N-2M+2)\frac{\sqrt{N}}{M} $ is a better bound for this supremum. 
\end{remark}

We now consider lower bounds for the summation in 
\eqref{ENTFAA2.2.1} in the case of tight frames. The next proposition demonstrates that this quantity is at 
least the product of the tight frame bound and the inner product 
of the unit vectors.  

\begin{proposition}
If $\{\varphi_{i}\}_{i=1}^{N}$ is a unit norm tight frame in 
$\HH_{M}$, then for any unit norm $x,y\in\HH_{M}$ we have 
\begin{equation*}
\frac{N}{M}|\langle x,y\rangle|
\leq
\sum\limits_{i=1}^{N}
|\langle x,\varphi_{i}\rangle||\langle y,\varphi_{i}\rangle|.
\end{equation*}
\end{proposition}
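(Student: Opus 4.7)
The plan is to use the reconstruction formula associated with a tight frame and then apply the triangle inequality. Since $\{\varphi_i\}_{i=1}^N$ is a unit norm tight frame for $\HH_M$, by the earlier proposition the frame bound is $A = N/M$, so the frame operator satisfies $S = (N/M)\,I$. In particular, for every $x \in \HH_M$,
\begin{equation*}
\frac{N}{M}\, x = \sum_{i=1}^N \langle x, \varphi_i\rangle\, \varphi_i.
\end{equation*}

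Next, I would take the inner product of both sides with $y$ and use conjugate linearity to obtain
\begin{equation*}
\frac{N}{M}\, \langle x, y\rangle = \sum_{i=1}^N \langle x, \varphi_i\rangle \, \overline{\langle y, \varphi_i\rangle}.
\end{equation*}
Taking absolute values on both sides and applying the triangle inequality to the sum on the right yields
\begin{equation*}
\frac{N}{M}\, |\langle x, y\rangle| \leq \sum_{i=1}^N |\langle x, \varphi_i\rangle|\, |\langle y, \varphi_i\rangle|,
\end{equation*}
which is exactly the inequality claimed.

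There is no real obstacle here; the only thing to be careful about is using the correct form of the frame operator identity ($S = (N/M)I$ follows from the frame being unit norm and tight with bound $N/M$) and handling the complex conjugate correctly so that the triangle inequality gives $|\langle x,\varphi_i\rangle||\langle y,\varphi_i\rangle|$ on the right-hand side. The argument works equally well over $\RR$ or $\CC$.
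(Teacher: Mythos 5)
Your proof is correct and is essentially identical to the paper's: both expand $\frac{N}{M}\langle x,y\rangle$ via the frame operator identity $S=(N/M)I$ and then apply the triangle inequality to the resulting sum $\sum_{i=1}^N \langle x,\varphi_i\rangle\overline{\langle y,\varphi_i\rangle}$. The only cosmetic difference is that the paper applies $S$ to $y$ and pairs with $x$, while you apply it to $x$ and pair with $y$.
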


\begin{proof}
Let $x,y\in\HH_{M}$ be unit norm and consider
\begin{align*}
\frac{N}{M}|\langle x,y\rangle|
&=
\Big|\big\langle x,\frac{N}{M}y\big\rangle\Big|
=
\Big|\Big\langle x,\sum\limits_{i=1}^{N}
\langle y,\varphi_{i}\rangle\varphi_{i}\Big\rangle\Big|
=
\Big|\sum\limits_{i=1}^{N}
\langle x,\varphi_{i}\rangle
\overline{\langle y,\varphi_{i}\rangle}\Big|
\nonumber\\[2 pt]
&\leq
\sum\limits_{i=1}^{N}
|\langle x,\varphi_{i}\rangle||\langle y,\varphi_{i}\rangle|.
\end{align*}
\end{proof}

Continuing our investigation for lower bounds in the summation 
in equation~\eqref{ENTFAA2.2.1}, we make a slight modification in the 
following lemma. We will consider that $y$ is a \emph{fixed} element of our frame and allow $x$ to vary.

\begin{lemma}
Let $\{\varphi_{i}\}_{i=1}^{N}$ be an equiangular unit norm tight 
frame in $\HH_{M}$ and fix $j\in\{1,\ldots,N\}$. Then 
\begin{equation*}
\inf\Big\{\sum\limits_{i=1}^{N}
|\langle\varphi_{j},\varphi_{i}\rangle|
|\langle x,\varphi_{i}\rangle|:
x\in\HH_{M},\|x\|=1\Big\}
\geq
\frac{N}{M}\sqrt{\frac{N-M}{M(N-1)}}.
\end{equation*}
In particular, when 
\begin{enumerate}[(i)]
\item $N=2M$, we get 
\begin{equation*}
\inf\Big\{\sum\limits_{i=1}^{N}
|\langle\varphi_{j},\varphi_{i}\rangle|
|\langle x,\varphi_{i}\rangle|:
x\in\HH_{M},\|x\|=1\Big\}
\geq
\frac{2}{\sqrt{2M-1}};
\end{equation*}

\item $N=\frac{M(M+1)}{2}$, we get
\begin{equation*}
\inf
\Big\{\sum\limits_{i=1}^{N}
|\langle\varphi_{j},\varphi_{i}\rangle|
|\langle x,\varphi_{i}\rangle|:
x\in\HH_{M},\|x\|=1\Big\}
\geq
\frac{M+1}{2\sqrt{M+2}}.
\end{equation*}
\end{enumerate}
\end{lemma}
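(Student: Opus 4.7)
The plan is to exploit the two-valued structure of $|\langle\varphi_j,\varphi_i\rangle|$ under the equiangular hypothesis and then reduce the whole estimate to a simple pointwise inequality on $[0,1]$.

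First I would identify the common off-diagonal modulus $d := |\langle\varphi_j,\varphi_i\rangle|$ for $i\neq j$. Since a unit norm tight frame has tight bound $N/M$, applying $\sum_{i=1}^N|\langle\varphi_j,\varphi_i\rangle|^2 = \frac{N}{M}\|\varphi_j\|^2 = \frac{N}{M}$ and separating the diagonal term $i=j$ gives $1+(N-1)d^2 = N/M$, i.e.\ the Welch value $d = \sqrt{\frac{N-M}{M(N-1)}}$. In particular $d\leq 1$.

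Next I would rewrite the target sum using this split:
\[
\sum_{i=1}^N|\langle\varphi_j,\varphi_i\rangle||\langle x,\varphi_i\rangle|
= |\langle x,\varphi_j\rangle| + d\sum_{i\neq j}|\langle x,\varphi_i\rangle|
= (1-d)|\langle x,\varphi_j\rangle| + d\sum_{i=1}^N|\langle x,\varphi_i\rangle|.
\]
Because $d\leq 1$, the first summand is non-negative, so the problem reduces to showing $\sum_{i=1}^N|\langle x,\varphi_i\rangle|\geq N/M$ for every unit vector $x$. This is the only non-trivial step, and it is short: Cauchy--Schwarz gives $|\langle x,\varphi_i\rangle|\leq \|x\|\|\varphi_i\|=1$, so the pointwise inequality $t\geq t^2$ on $[0,1]$ yields $|\langle x,\varphi_i\rangle|\geq |\langle x,\varphi_i\rangle|^2$; summing and applying the tight frame identity produces
\[
\sum_{i=1}^N |\langle x,\varphi_i\rangle| \geq \sum_{i=1}^N |\langle x,\varphi_i\rangle|^2 = \frac{N}{M}\|x\|^2 = \frac{N}{M}.
\]

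Combining the two ingredients gives the sum is bounded below by $d\cdot N/M = \frac{N}{M}\sqrt{\frac{N-M}{M(N-1)}}$, which is the desired general inequality. The two special cases are then purely algebraic: substituting $N=2M$ collapses the radicand to $1/(2M-1)$, giving $2/\sqrt{2M-1}$; substituting $N=M(M+1)/2$, one cancels the common factor $M-1$ from $N-M = M(M-1)/2$ and $M(N-1) = M(M-1)(M+2)/2$ to obtain $1/(M+2)$, hence $(M+1)/(2\sqrt{M+2})$. I do not anticipate a real obstacle here; the key insight is simply that, on the range $[0,1]$ guaranteed by Cauchy--Schwarz, the $\ell^2$ tightness identity upgrades to an $\ell^1$ lower bound, and everything else is the algebra of the equiangular decomposition.
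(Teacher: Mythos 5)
Your proposal is correct and follows essentially the same route as the paper's proof: the same split of the sum into the diagonal term plus $c$ times the full $\ell^1$ sum, the same discarding of the non-negative $(1-c)|\langle x,\varphi_j\rangle|$ term, and the same upgrade $\sum_i|\langle x,\varphi_i\rangle|\geq\sum_i|\langle x,\varphi_i\rangle|^2=N/M$ via Cauchy--Schwarz. The only (harmless) addition is that you derive the Welch value of $c$ from the tight frame identity, which the paper simply quotes.
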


\begin{proof}
Fix a unit norm $x\in\HH_{M}$ and fix $j\in\{1,\ldots,N\}$. Next, 
define $I:=\{1,\ldots,N\}\setminus\{j\}$ and 
$c:=\sqrt{\frac{N-M}{M(N-1)}}$. Consider 
\begin{align}
\sum\limits_{i=1}^{N}
|\langle\varphi_{j},\varphi_{i}\rangle|
|\langle x,\varphi_{i}\rangle|
&=
\sum\limits_{i\in I}
|\langle\varphi_{j},\varphi_{i}\rangle|
|\langle x,\varphi_{i}\rangle|
+
|\langle\varphi_{j},\varphi_{j}\rangle|
|\langle x,\varphi_{j}\rangle|
\nonumber\\[2 pt]
&=
c\sum\limits_{i\in I}|\langle x,\varphi_{i}\rangle|
+
|\langle x,\varphi_{j}\rangle|
\nonumber\\[2 pt]
&=
c\sum\limits_{i=1}^{N}|\langle x,\varphi_{i}\rangle|
+
(1-c)|\langle x,\varphi_{j}\rangle|
\nonumber\\[ 2 pt]
&\geq
c\sum\limits_{i=1}^{N}|\langle x,\varphi_{i}\rangle|
\geq
c\sum\limits_{i=1}^{N}|\langle x,\varphi_{i}\rangle|^{2}
=
c\,\frac{N}{M}.
\end{align}
This is a lower bound for the set 
$\Big\{\sum\limits_{i=1}^{N}
|\langle\varphi_{j},\varphi_{i}\rangle|
|\langle x,\varphi_{i}\rangle|:x\in\HH_{M},\|x\|=1\Big\}$ 
which is independent of the unit norm $x\in\HH_M$. 
\end{proof}





\section{Distance between vectors and frame vectors}

In this section, we will give good estimates of the squared sums of the distances between a vector and the frame vectors.  We wil discover some surprising uniformities for the equiangular case.

The following lemma establishes a relationship between the 
coefficients of a collection of vectors and the norms and inner 
products of these vectors. While interesting in its own right, 
this lemma serves as a tool in 
Proposition~\ref{ENTFAA2.26}.

\begin{lemma}\label{ENTFAA2.21}
Let $\{\varphi_{i}\}_{i=1}^{N}$ be a collection of vectors in 
$\mathbb{R}^{M}$ whose components are given as 
$\varphi_{1}=(\varphi_{11},\ldots,\varphi_{1M}),\ldots,
\varphi_{N}=(\varphi_{N1},\ldots,\varphi_{NM})$. 
Then
\begin{align}\label{ENTFAA2.23}
\sum\limits_{j=1}^{M}
\Big(\sum\limits_{i=1}^{N}\varphi_{ij}\Big)^{2}
&=
\sum\limits_{i=1}^{N}\sum\limits_{j=1}^{M}\varphi_{ij}
\Big(\sum\limits_{k=1}^{N}\varphi_{kj}\Big)
\nonumber\\[2 pt]
&=
\sum\limits_{i=1}^{N}\|\varphi_{i}\|^{2}+
2\sum\limits_{1\leq i<k\leq N}
\langle\varphi_{i},\varphi_{k}\rangle.
\end{align}
\end{lemma}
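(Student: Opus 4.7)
The identity is essentially the coordinate expansion of the familiar formula
\[
\Big\|\sum_{i=1}^{N}\varphi_i\Big\|^2
=
\sum_{i=1}^{N}\|\varphi_i\|^2 + 2\!\!\sum_{1\leq i<k\leq N}\!\!\langle\varphi_i,\varphi_k\rangle,
\]
so my plan is to verify each of the two equalities in \eqref{ENTFAA2.23} by a direct manipulation of the finite sums. There is no real obstacle: the result is a pure bookkeeping computation, and the only thing to be careful about is keeping the roles of the indices $i,j,k$ straight (with $j$ being the coordinate index and $i,k$ indexing the vectors).

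For the first equality, I would start from the left-hand side and rewrite the square as a product of two identical sums, renaming the summation variable in the second copy to avoid collision:
\[
\sum_{j=1}^{M}\Big(\sum_{i=1}^{N}\varphi_{ij}\Big)^2
=
\sum_{j=1}^{M}\Big(\sum_{i=1}^{N}\varphi_{ij}\Big)\Big(\sum_{k=1}^{N}\varphi_{kj}\Big).
\]
Distributing the outer sum over $j$ into the first factor and interchanging the $j$- and $i$-sums (both are finite, so no convergence issue) immediately produces
$\sum_{i=1}^{N}\sum_{j=1}^{M}\varphi_{ij}\bigl(\sum_{k=1}^{N}\varphi_{kj}\bigr)$, which is the middle expression in \eqref{ENTFAA2.23}.

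For the second equality, I would pull the $k$-sum outside and recognize the inner combination as an inner product:
\[
\sum_{i=1}^{N}\sum_{j=1}^{M}\varphi_{ij}\Big(\sum_{k=1}^{N}\varphi_{kj}\Big)
=
\sum_{i=1}^{N}\sum_{k=1}^{N}\sum_{j=1}^{M}\varphi_{ij}\varphi_{kj}
=
\sum_{i=1}^{N}\sum_{k=1}^{N}\langle\varphi_i,\varphi_k\rangle.
\]
I would then split the double sum over $(i,k)\in\{1,\ldots,N\}^2$ into the diagonal $i=k$ and the off-diagonal $i\neq k$. The diagonal part contributes $\sum_{i=1}^{N}\langle\varphi_i,\varphi_i\rangle=\sum_{i=1}^{N}\|\varphi_i\|^2$, and because the inner product on $\mathbb{R}^M$ is symmetric the off-diagonal pair $(i,k)$ and $(k,i)$ contribute equal summands, so the off-diagonal part equals $2\sum_{1\leq i<k\leq N}\langle\varphi_i,\varphi_k\rangle$. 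Adding the two parts yields the right-hand side of \eqref{ENTFAA2.23}, completing the proof.
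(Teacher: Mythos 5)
Your proof is correct and is essentially the same elementary bookkeeping as the paper's: the first equality by writing the square as a product with a renamed index and interchanging finite sums, and the second by reorganizing $\sum_{i,k}\langle\varphi_i,\varphi_k\rangle$ into its diagonal and off-diagonal parts (the paper expands the square of the inner sum directly, which amounts to the same splitting). No gaps.
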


\begin{proof}
Concerning the first equality, note that 
\begin{align*}
\sum\limits_{i=1}^{N}\sum\limits_{j=1}^{M}\varphi_{ij}
\Big(\sum\limits_{k=1}^{N}\varphi_{kj}\Big)
&=
\sum\limits_{j=1}^{M}\sum\limits_{i=1}^{N}\varphi_{ij}
\Big(\sum\limits_{k=1}^{N}\varphi_{kj}\Big)
=
\sum\limits_{j=1}^{M}\Big(\sum\limits_{k=1}^{N}\varphi_{kj}\Big)
\sum\limits_{i=1}^{N}\varphi_{ij}
\nonumber\\[2 pt]
&=
\sum\limits_{j=1}^{M}
\Big(\sum\limits_{i=1}^{N}\varphi_{ij}\Big)^{2}.
\end{align*}
For the last equality in \eqref{ENTFAA2.23}, consider
\begin{align*}
\sum\limits_{j=1}^{M}
\Big(\sum\limits_{i=1}^{N}\varphi_{ij}\Big)^{2}
&=
\sum\limits_{j=1}^{M}
\Big(\sum\limits_{i=1}^{N}\varphi_{ij}^{2}+
2\sum\limits_{1\leq i<k\leq N}\varphi_{ij}\varphi_{kj}\Big)
\nonumber\\[2 pt]
&=
\sum\limits_{i=1}^{N}\sum\limits_{j=1}^{M}\varphi_{ij}^{2}+
2\sum\limits_{1\leq i<k\leq N}\sum\limits_{j=1}^{M}
\varphi_{ij}\varphi_{kj}
\nonumber\\[2 pt]
&=
\sum\limits_{i=1}^{N}\|\varphi_{i}\|^{2}+
2\sum\limits_{1\leq i<k\leq N}
\langle\varphi_{i},\varphi_{k}\rangle,
\end{align*}
as desired.
\end{proof}

The proposition below establishes upper and lower bounds for the 
sum of the squares of the distances between any unit vector and 
the frame vectors in an equiangular tight frame in 
terms of the dimension and the modulus of the inner product between vectors. 
Recall from \cite{CK} that if $\{\varphi_{i}\}_{i=1}^{N}$ is an 
equiangular tight frame for $\HH_{M}$, then for $i\neq j$ we have 
\begin{equation}\label{ENTFAA2.20}
|\langle\varphi_{i},\varphi_{j}\rangle|^{2}=\frac{N-M}{M(N-1)}.
\end{equation}

\begin{proposition}\label{ENTFAA2.26}
Let $\{\varphi_{i}\}_{i=1}^{N}$ be an equiangular tight 
frame in $\mathbb{R}^{M}$. Then for any unit norm 
$x\in\mathbb{R}^{M}$, we have 
\begin{align*}
2\big(N-\sqrt{N[1+(N-1)c]}\big)
&\leq
\sum\limits_{i={1}}^{N}\lVert x-\varphi_{i}\rVert^{2}
\nonumber\\[2 pt]
&\leq
2\big(N+\sqrt{N[1+(N-1)c]}\big),
\end{align*}
where $c:=\sqrt{\frac{N-M}{M(N-1)}}=\lvert\langle\varphi_{i},\varphi_{j}\rangle\rvert,$ for all $i\neq j$.
\end{proposition}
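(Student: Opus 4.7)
The plan is to reduce the problem to estimating $\|\sum_{i=1}^N \varphi_i\|$, and then to control this quantity using Lemma~\ref{ENTFAA2.21} together with the equiangular hypothesis. First I would expand each squared distance using that $\|\varphi_i\|=1$ (equiangular frames are unit norm in the paper's convention) and that $\|x\|=1$:
\begin{equation*}
\|x-\varphi_i\|^{2} = \|x\|^{2}-2\langle x,\varphi_i\rangle+\|\varphi_i\|^{2} = 2-2\langle x,\varphi_i\rangle.
\end{equation*}
Summing over $i$ gives
\begin{equation*}
\sum_{i=1}^{N}\|x-\varphi_i\|^{2} = 2N - 2\Big\langle x,\sum_{i=1}^{N}\varphi_i\Big\rangle,
\end{equation*}
so the two-sided estimate reduces to showing $\bigl|\langle x,\sum_{i=1}^{N}\varphi_i\rangle\bigr|\le \sqrt{N[1+(N-1)c]}$.

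Next I would apply the Cauchy--Schwarz inequality to obtain $\bigl|\langle x,\sum_{i=1}^{N}\varphi_i\rangle\bigr| \le \bigl\|\sum_{i=1}^{N}\varphi_i\bigr\|$, and then use Lemma~\ref{ENTFAA2.21} to expand
\begin{equation*}
\Big\|\sum_{i=1}^{N}\varphi_i\Big\|^{2} = \sum_{i=1}^{N}\|\varphi_i\|^{2} + 2\sum_{1\le i<k\le N}\langle \varphi_i,\varphi_k\rangle = N + 2\sum_{1\le i<k\le N}\langle \varphi_i,\varphi_k\rangle.
\end{equation*}
Because the frame is equiangular, $|\langle \varphi_i,\varphi_k\rangle|=c$ for all $i\neq k$, so in particular $\langle \varphi_i,\varphi_k\rangle \le c$ and the double sum is at most $2\binom{N}{2}c = N(N-1)c$. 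This gives
\begin{equation*}
\Big\|\sum_{i=1}^{N}\varphi_i\Big\|^{2} \le N + N(N-1)c = N[1+(N-1)c].
\end{equation*}

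Combining these yields $\bigl|\langle x,\sum_{i=1}^{N}\varphi_i\rangle\bigr| \le \sqrt{N[1+(N-1)c]}$. Substituting back into $\sum_{i=1}^{N}\|x-\varphi_i\|^{2} = 2N - 2\langle x,\sum_{i=1}^{N}\varphi_i\rangle$ produces the claimed lower and upper bounds simultaneously, since the subtracted term lies in $[-\sqrt{N[1+(N-1)c]},\sqrt{N[1+(N-1)c]}]$.

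The main subtlety, rather than difficulty, is the loss of sign information: equiangularity only controls $|\langle \varphi_i,\varphi_k\rangle|$, so the bound on $\sum_{i<k}\langle \varphi_i,\varphi_k\rangle$ is obtained by absorbing the signs into the inequality $\langle \varphi_i,\varphi_k\rangle \le c$. This is why Lemma~\ref{ENTFAA2.21} is the right tool: it packages the inner sum in exactly the form one would bound from above. No use is made of tightness beyond the assumption $\|\varphi_i\|=1$, which is worth flagging as a remark.
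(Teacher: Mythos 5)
Your proof is correct, and it takes a genuinely more elementary route than the paper. The paper sets up $f(y)=\sum_i\|y-\varphi_i\|^2$ and $g(y)=\|y\|^2$ and runs the method of Lagrange multipliers on the unit sphere, arriving at extrema $2(N\pm|\lambda|)$ with $\lambda^2=N+2\sum_{i<k}\langle\varphi_i,\varphi_k\rangle$, and only then bounds $\lambda^2$ by replacing each $\langle\varphi_i,\varphi_k\rangle$ with $c$. You reach the same key quantity $\bigl\|\sum_i\varphi_i\bigr\|^2=N+2\sum_{i<k}\langle\varphi_i,\varphi_k\rangle$ (via Lemma~\ref{ENTFAA2.21}, just as the paper does), but you short-circuit the calculus entirely: the extrema of the linear functional $x\mapsto\langle x,\sum_i\varphi_i\rangle$ on the unit sphere are $\pm\bigl\|\sum_i\varphi_i\bigr\|$ by Cauchy--Schwarz, which is all the Lagrange computation is really establishing. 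Your version is arguably cleaner on one point where the paper is murky: the paper's phrase ``$\lambda$ reaches its extrema when $\langle\varphi_i,\varphi_k\rangle=c$'' reads as if the inner products were variables, whereas for a fixed frame the sum $\sum_{i<k}\langle\varphi_i,\varphi_k\rangle$ is a single number that one merely bounds above by $\binom{N}{2}c$ using $\langle\varphi_i,\varphi_k\rangle\le|\langle\varphi_i,\varphi_k\rangle|=c$; you state exactly this. Your closing observation is also accurate and worth keeping: tightness enters only through the numerical value of $c$ in \eqref{ENTFAA2.20}, and the two-sided estimate holds for any unit norm frame whose pairwise inner products are bounded in modulus by $c$.
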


\begin{proof}
Fix a unit norm $x\in\mathbb{R}^{M}$ and write the vectors 
$x,\varphi_{1},\ldots,\varphi_{N}\in\mathbb{R}^{M}$ in terms of 
their components; that is, write $x=(x_{1},\ldots,x_{M})$ and $
\varphi_{i}=(\varphi_{i1},\ldots,\varphi_{iM})$ for each $i$.

Next, define the functions 
$f,g:\mathbb{R}^{M}\longrightarrow\mathbb{R}$ by 
\begin{equation*}
f(y):=\sum\limits_{i=1}^{N}\lVert y-\varphi_{i}\rVert^{2}
\qquad\mbox{and}\qquad
g(y):=\|y\|^{2}.
\end{equation*}
Substituting our unit norm $x\in\mathbb{R}^{M}$ into the functions 
$f,g$ give 
\begin{equation}\label{ENTFAA2.30}
\begin{array}{c}
f(x)
=
\sum\limits_{i=1}^{N}\big(\|x\|^{2}+\|\varphi_{i}\|^{2}
-2\langle x,\varphi_{i}\rangle\big)
=
2N-2\sum\limits_{i=1}^{N}\sum\limits_{j=1}^{M}x_{j}\varphi_{ij};
\\[2 pt]
g(x)=\sum\limits_{i=1}^{M}x_{i}^{2}=1.
\end{array}
\end{equation}
At this stage, the main idea is to use the method of Lagrange multipliers on 
the function $f(x)$ subject to the constraint function $g(x)$ to 
identify any absolute extrema. To this end, we calculate 
$(\nabla f)(x)$ and $(\nabla g)(x)$ as 
\begin{equation*}
\begin{array}{c}
(\nabla f)(x)
=
-2\Big(\sum\limits_{i=1}^{N}\varphi_{i1},
\sum\limits_{i=1}^{N}\varphi_{i2},\ldots,
\sum\limits_{i=1}^{N}\varphi_{iM}\Big);
\\[0.2 in]
(\nabla g)(x)
=
2(x_{1},x_{2},\ldots,x_{M}).
\end{array}
\end{equation*}
We solve the system 
of equations $(\nabla f)(x)=\lambda[(\nabla g)(x)]$, $g(x)=1$ 
where $\lambda\in\mathbb{R}$ to obtain
\begin{equation}\label{ENTFAA2.34}
x_{1}=-\frac{1}{\lambda}\sum\limits_{i=1}^{N}\varphi_{i1},
\quad
x_{2}=-\frac{1}{\lambda}\sum\limits_{i=1}^{N}\varphi_{i2},
\quad\ldots,\quad
x_{M}=-\frac{1}{\lambda}\sum\limits_{i=1}^{N}\varphi_{iM}.
\end{equation}
Substituting these coordinates into the function $g(x)=1$ gives 
\begin{equation*}
1
=
\frac{1}{\lambda^{2}}\sum\limits_{j=1}^{M}
\Big(\sum\limits_{i=1}^{N}\varphi_{ij}\Big)^{2}.
\end{equation*}
By Lemma~\ref{ENTFAA2.21}, we may write the above equation 
as 
\begin{equation*}
\lambda^{2}
=
\sum\limits_{i=1}^{N}\|\varphi_{i}\|^{2}+
2\sum\limits_{1\leq i<k\leq N}
\langle\varphi_{i},\varphi_{k}\rangle
=
N+2\sum\limits_{1\leq i<k\leq N}
\langle\varphi_{i},\varphi_{k}\rangle,
\end{equation*}
which implies 
\begin{equation*}
\lambda
=
\pm\Big(N+2\sum\limits_{1\leq i<k\leq N}
\langle\varphi_{i},\varphi_{k}\rangle\Big)^{1/2}.
\end{equation*}
Substituting the coordinates in \eqref{ENTFAA2.34} into the 
function $f$ given in \eqref{ENTFAA2.30} and using 
Lemma~\ref{ENTFAA2.21} yields
\begin{align*}
f(x)
&=
2N-2
\sum\limits_{i=1}^{N}\sum\limits_{j=1}^{M}
\Big(-\frac{1}{\lambda}\sum\limits_{k=1}^{N}\varphi_{kj}\Big)
\varphi_{ij}
\nonumber\\[2 pt]
&=
2N+\frac{2}{\lambda}
\Big[\sum\limits_{i=1}^{N}\sum\limits_{j=1}^{M}\varphi_{ij}
\Big(\sum\limits_{k=1}^{N}\varphi_{kj}\Big)\Big]
\nonumber\\[2 pt]
&=
2N+\frac{2}{\lambda}
\Big[\sum\limits_{i=1}^{N}\|\varphi_{i}\|^{2}+
2\sum\limits_{1\leq i<k\leq N}
\langle\varphi_{i},\varphi_{k}\rangle\Big]
\nonumber\\[2 pt]
&=
2N+\frac{2}{\lambda}[\lambda^{2}]
=
2(N+\lambda).
\end{align*}
Thus, the global extrema for the function $f$ are $2(N+\lambda)$. 
Since $\lambda$ depends on 
$\langle\varphi_{i},\varphi_{k}\rangle$ $($and \emph{not} on 
$|\langle\varphi_{i},\varphi_{k}\rangle|)$ for each $i\neq k$, then $\lambda$ reaches its extrema when $\langle\varphi_{i},\varphi_{k}\rangle=c$ where $c>0$. Since 
$\{\varphi_{i}\}_{i=1}^{N}$ is an equiangular tight 
frame,
$\displaystyle c=\sqrt{\frac{N-M}{M(N-1)}}$. Under these assumptions,
\begin{equation*}
\lambda
=
\pm\sqrt{N+2\frac{N(N-1)}{2}c}
=
\pm\sqrt{N[1+(N-1)c]}.
\end{equation*}
With $\lambda$ as such, the result follows.
\end{proof}

\begin{remark}\label{ENTFAA2.40}
In Proposition~\ref{ENTFAA2.26}, the lower bound is always
\emph{positive}. Indeed, $2\big(N-\sqrt{N[1+(N-1)c]}\big)>0$ if and only if $c<1$ which is satisfied a priori.
\end{remark}

We establish slightly weaker bounds for 
Proposition~\ref{ENTFAA2.26} in the following corollary. Although these bounds are weaker, they are more accessible.

\begin{corollary}\label{ENTFAA2.42}
Let $\{\varphi_{i}\}_{i=1}^{N}$ be an equiangular tight 
frame in $\mathbb{R}^{M}$. Then for any unit norm 
$x\in\mathbb{R}^{M}$, we have 
\begin{align}\label{ENTFAA2.43}
2N(1-\sqrt{2c})
&<
2\big(N-\sqrt{N[1+(N-1)c]}\big)
\nonumber\\[2 pt]
&\leq
\sum\limits_{i=1}^{N}\lVert x-\varphi_{i}\rVert^{2}
\nonumber\\[2 pt]
&\leq
2\big(N+\sqrt{N[1+(N-1)c]}\big)
\nonumber\\[2 pt]
&<
2N(1+\sqrt{2c}),
\end{align}
where $c:=\sqrt{\frac{N-M}{M(N-1)}}$; i.e., 
$c=\lvert\langle\varphi_{i},\varphi_{j}\rangle\rvert,
\;\forall\,i,j\in\{1,\ldots,N\}$ such that $i\neq j$.
\end{corollary}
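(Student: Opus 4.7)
The plan is to reduce the claim to a single elementary algebraic inequality and then verify it directly. Since Proposition~\ref{ENTFAA2.26} already furnishes the two middle inequalities in \eqref{ENTFAA2.43}, only the outer strict inequalities remain.

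First I would observe that both outer inequalities collapse to the same statement. Subtracting $2N$ from each side and dividing appropriately, both
\[
2N(1-\sqrt{2c}) < 2\bigl(N-\sqrt{N[1+(N-1)c]}\bigr) \quad\text{and}\quad 2\bigl(N+\sqrt{N[1+(N-1)c]}\bigr) < 2N(1+\sqrt{2c})
\]
reduce to the single relation $\sqrt{N[1+(N-1)c]} < N\sqrt{2c}$. Since $c>0$ in the non-trivial regime $N>M$ for an equiangular tight frame, both sides are positive, so squaring gives the equivalent condition $1+(N-1)c < 2Nc$, i.e.\ $c(N+1) > 1$.

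The remaining step is to verify $c(N+1) > 1$ using the explicit formula $c^{2}=\tfrac{N-M}{M(N-1)}$. After squaring, this becomes $(N-M)(N+1)^{2} > M(N-1)$. A short algebraic manipulation rewrites the difference of the two sides as
\[
N\bigl[(N+1)^{2}-M(N+3)\bigr],
\]
so it suffices to verify $(N+1)^{2} > M(N+3)$. Plugging in $N=M+1$ yields $(M+2)^{2}-M(M+4)=4>0$, and the derivative in $N$ equals $2(N+1)-M$, which is positive for all $N\geq M+1$. Hence $(N+1)^{2}-M(N+3)$ is increasing on $[M+1,\infty)$ and stays strictly positive throughout the range $N>M$.

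The only real obstacle is bookkeeping: recognizing that both outer inequalities reduce to the same relation, and then handling the polynomial algebra cleanly. One should also flag the implicit assumption $N>M$ (i.e.\ a non-trivial ETF with $c>0$), since the strict outer inequalities would degenerate in the case $N=M$ where $c=0$.
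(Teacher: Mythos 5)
Your proposal is correct, and it follows the same overall architecture as the paper's proof: the middle inequalities come from Proposition~\ref{ENTFAA2.26}, and the outer strict inequalities come down to comparing $\sqrt{N[1+(N-1)c]}$ with $N\sqrt{2c}$ using the explicit ETF value of $c$. Where you differ is in the execution of that comparison. The paper establishes the simpler sufficient bound $c>\tfrac1N$ (via $c=\sqrt{\tfrac{N-M}{M(N-1)}}\ge\sqrt{\tfrac{1}{N(N-1)}}>\tfrac1N$, using $N-M\ge 1$ and $M\le N$) and then chains estimates inside the radical, $\sqrt{\tfrac1N+\tfrac{N-1}{N}c}<\sqrt{\tfrac1N+c}<\sqrt{2c}$. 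You instead square and reduce both outer inequalities to the \emph{exact} equivalent condition $(N+1)c>1$, which you then verify by the polynomial identity $(N-M)(N+1)^2-M(N-1)=N\bigl[(N+1)^2-M(N+3)\bigr]$ and a monotonicity check; your algebra here is correct. Your route identifies the sharp threshold $c>\tfrac{1}{N+1}$ rather than the stronger-than-needed $c>\tfrac1N$, and it makes explicit the hypothesis $N>M$ that the paper also uses silently (the corollary genuinely fails for $N=M$, where $c=0$). The one piece of the paper's proof you do not reproduce is the concluding case analysis showing $2N(1-\sqrt{2c})>0$ (equivalently $c<1/2$); that positivity claim is not part of the displayed chain \eqref{ENTFAA2.43}, so its omission is not a gap in a proof of the stated corollary, but it is what makes the lower bound informative.
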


\begin{proof}
First note 
\begin{equation*}
c
=
\sqrt{\frac{N-M}{M(N-1)}}
\geq
\sqrt{\frac{1}{M(N-1)}}
\geq
\sqrt{\frac{1}{N(N-1)}}
>
\frac{1}{N}.
\end{equation*}
For the right hand side of the inequality in \eqref{ENTFAA2.43}, 
we have 
\begin{align}\label{ENTFAA2.45}
2\big(N+\sqrt{N[1+(N-1)c]}\big)
&=
2\Big(N+N\sqrt{\frac{1}{N}+\frac{N-1}{N}c}\Big)
\nonumber\\[2 pt]
&<
2\Big(N+N\sqrt{\frac{1}{N}+c}\Big)
\nonumber\\[2 pt]
&<
2(N+N\sqrt{2c})
\nonumber\\[2 pt] 
&=
2N(1+\sqrt{2c}).
\end{align}
For the left hand side of the inequality in 
\eqref{ENTFAA2.43}, we have 
\begin{align}\label{ENTFAA2.46}
2\big(N-\sqrt{N[1+(N-1)c]}\big)
&=
2\Big(N-N\sqrt{\frac{1}{N}+\frac{N-1}{N}c}\Big)
\nonumber\\[2 pt]
&>
2\Big(N-N\sqrt{\frac{1}{N}+c}\Big)
\nonumber\\[2 pt]
&>
2(N-N\sqrt{2c})
\nonumber\\[2 pt]
&=
2N(1-\sqrt{2c}),
\end{align}
as wanted. The last order of business is to check that 
$2N(1-\sqrt{2c})>0$ which happens if and only if $c<1/2$. 
We consider the following scenarios:
\vskip12pt
\noindent\underline{Case 1:} 
$M \geq 4$. We proceed by way of contradiction. Note that $c\geq 1/2$ if and only if 
$\displaystyle {N\geq\frac{NM}{4}+\frac{3M}{4}}$. If $M\geq 4$ and 
$c\geq 1/2$ then \[{N\geq\frac{NM}{4}+\frac{3M}{4}\geq N+3},\] 
a contradiction. Therefore $c<1/2$ when $M\geq 4$ 
$($which forces $N\geq 4)$. 
\vskip12pt
\noindent\underline{Case 2:} 
$M=3.$ Note that $c<1/2$ if and only if $4N-3M-MN<0$. Thus, for 
$M=3$, we have $4N-3(3)-3N<0$ if and only if $N<9$. Recall in 
$\mathbb{R}^{3}$, it is known that the maximum number of vectors 
in an equiangular unit norm tight frame is $6$
 (see www.framerc.org/equiangular Frames). Hence, 
$c<1/2$ when $M=3$ $($and, hence, $3\leq N\leq 6<9)$.
\vskip12pt
\noindent\underline{Case 3:} 
$M=2.$ Recall that in $\mathbb{R}^{2}$, it is known that the maximum 
number of vectors in an equiangular unit norm tight frame is $3$ (see www.framerc.org/equiangular Frames). 
Also, from \underline{Case 2:}, we know $c<1/2$ if and 
only if $4N-3M-MN<0$. For $M=2$, this becomes $4N-3(2)-2N<0$ 
which happens if and only if $N<3$. Consequently, $c<1/2$ when 
$M=2$ and $N=2$. 

This finishes the proof of Corollary~\ref{ENTFAA2.42}.
\end{proof}

\begin{remark}

For the special case $M=2$ and $N=3$, it follows 
that $c=1/2$ $($see \eqref{ENTFAA2.20}$)$ and 
\begin{equation*}
\frac{1}{N}+c
=
\frac{1}{3}+\frac{1}{2}
=
\frac{5}{6}
=
\frac{5}{3}\cdot\frac{1}{2}
=
\frac{5}{3}c.
\end{equation*}
Thus, we actually get improved bounds for the case when $M=2$ and 
$N=3$; specifically, $($see \eqref{ENTFAA2.45} and 
\eqref{ENTFAA2.46}$)$
\begin{align*}
2N\Big(1-\sqrt{\frac{5}{3}c}\Big)
&\leq
2\big(N-\sqrt{N[1+(N-1)c]}\big)
\nonumber\\[2 pt]
&\leq
\sum\limits_{i=1}^{N}\|x-\varphi_{i}\|^{2}
\nonumber\\[2 pt]
&\leq
2\big(N+\sqrt{N[1+(N-1)c]}\big)
\nonumber\\[2 pt]
&\leq
2N\left(1+\sqrt{\frac{5}{3}c}\right).
\end{align*}
\end{remark}

The next corollary gives a surprising identity exhibited by 
\emph{simplex frames}. Recall that up to multiplication by a unitary operator and switching (replacing a vector by its additive inverse) there is only one unit norm tight frame with $M+1$ elements in $\RR^M$ \cite{CK2}. This frame can be obtained in the following way. Let $\{e_i\}_{i=1}^{M+1}$ be the standard orthonormal basis for $\RR^{M+1}$ and let $P$ be the rank one orthogonal projection onto $\mathrm{span}\left(\sum_{i=1}^{M+1}e_i\right)$. Then the vectors 
\[\{\phi_i\}_{i=1}^{M+1}:=\left\{\frac{(I-P)e_i}{\|(I-P)e_i\|}\right\}_{i=1}^{M+1}\]
form an equiangular tight frame for $\RR^M$. This frame is commonly referred to as the simplex frame in $\RR^M$.
\begin{corollary}
Let $\{\varphi_{i}\}_{i=1}^{M+1}$ be the simplex frame in 
$\mathbb{R}^M$. Then for any unit norm $x\in\mathbb{R}^{M}$, we 
have 
\begin{equation*}
\sum\limits_{i=1}^{M+1}\|x-\varphi_{i}\|^{2}
=
2(M+1).
\end{equation*}
\end{corollary}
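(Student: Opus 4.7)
The heart of the proof should be the centroid identity $\sum_{i=1}^{M+1}\varphi_i = 0$ for the simplex frame. This falls directly out of the construction recalled just above the statement: since $P$ is the orthogonal projection onto $\mathrm{span}(\sum_{i=1}^{M+1}e_i)$, the vector $\sum_i e_i$ is fixed by $P$, hence $(I-P)\sum_i e_i = 0$. By the cyclic symmetry of the $e_i$'s all the normalizing scalars $\|(I-P)e_i\|$ coincide, so $\sum_i\varphi_i$ is a nonzero scalar multiple of $(I-P)\sum_i e_i = 0$.

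With this in hand, the rest is a one-line expansion of $\|x-\varphi_i\|^2 = \|x\|^2 - 2\langle x,\varphi_i\rangle + \|\varphi_i\|^2$. Summing over $i$ and using the three facts $\|x\|=1$, $\|\varphi_i\|=1$, and $\sum_i\varphi_i=0$, the linear (inner-product) term vanishes and the remaining two sums each contribute $M+1$, giving $2(M+1)$.

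A more roundabout route, for continuity with the preceding proposition, is to read the identity straight off the Lagrange-multiplier calculation inside Proposition~\ref{ENTFAA2.26}. For the simplex the \emph{signed} inner products are $\langle\varphi_i,\varphi_j\rangle = -1/M$ for $i\neq j$ (the negative sign matters), so that $2\sum_{1\le i<k\le N}\langle\varphi_i,\varphi_k\rangle = -(M+1)$ and the parameter $\lambda^2 = N + 2\sum_{i<k}\langle\varphi_i,\varphi_k\rangle$ appearing in that proof collapses to $0$. Consequently the maximum and minimum of $f(x) = \sum_i\|x-\varphi_i\|^2$ on the unit sphere both equal $2(N+\lambda) = 2(M+1)$, forcing $f$ to be constant on the sphere.

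There is no genuine obstacle. The only point that demands a moment of care is pinning down that the equiangular inner product is exactly $-1/M$ and not merely of modulus $1/M$; equivalently, that $\sum_i\varphi_i$ truly vanishes and not just has small norm. This is settled immediately by the formula $\langle (I-P)e_i,(I-P)e_j\rangle = -1/(M+1) < 0$ in $\mathbb{R}^{M+1}$.
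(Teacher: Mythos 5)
Your proof is correct, and your primary route is genuinely different from the paper's. The paper proves the corollary by the second of your two arguments: it cites Proposition~\ref{ENTFAA2.26} together with the fact that the \emph{signed} common inner product of the simplex is $c=-1/M=-1/(N-1)$, so that $1+(N-1)c=0$ and the upper and lower bounds of that proposition coincide at $2N=2(M+1)$. Your main argument instead establishes $\sum_{i=1}^{M+1}\varphi_i=0$ directly from the construction (since $(I-P)\sum_i e_i=0$ and the normalizing constants $\|(I-P)e_i\|$ all agree) and then expands $\|x-\varphi_i\|^2$ and sums; this is exactly the argument the paper deploys for the \emph{more general} theorem immediately following the corollary (unit norm tight frames with $\sum_i\varphi_i=0$), so in effect you have proved the corollary as a special case of that generalization rather than of Proposition~\ref{ENTFAA2.26}. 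Your route is more elementary and arguably cleaner, since it avoids the Lagrange-multiplier machinery entirely and sidesteps a soft spot in the paper: Proposition~\ref{ENTFAA2.26} formally defines $c$ as the positive quantity $\sqrt{(N-M)/(M(N-1))}$, so plugging in the negative value $-1/M$ requires reading off the signed version of $\lambda^2=N+2\sum_{i<k}\langle\varphi_i,\varphi_k\rangle$ from inside its proof --- the very point you flag as needing care, and which you settle correctly via $\langle(I-P)e_i,(I-P)e_j\rangle=-1/(M+1)$. What the paper's route buys is continuity with the preceding proposition; what yours buys is a self-contained two-line proof and a natural segue to the subsequent generalization.
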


\begin{proof}
This follows immediately from Proposition~\ref{ENTFAA2.26} and 
the fact that $\displaystyle {c=-\frac{1}{M}=-\frac{1}{N-1}}$  for the simplex in $\mathbb{R}^{M}$.
\end{proof}

There is a significant generalization of the above corollary
for the case where the frame vectors sum to zero.

\begin{theorem} 
Let $\{\varphi_i\}_{i=1}^N$ be a unit norm tight frame in $\mathcal{H}_M$ and assume $\sum_{i=1}^N \varphi_i=0$. Then $\sum_{i=1}^N \|x-\varphi_i\|^2 = 2N$ for any unit norm $x \in \mathcal{H}_M$.
\end{theorem}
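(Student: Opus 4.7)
The plan is to establish the identity by direct expansion of the squared norm, which turns out to require only three ingredients: $\|x\|=1$, the unit norm property of the frame vectors, and the hypothesis $\sum_{i=1}^N \varphi_i = 0$. Interestingly, the tight frame assumption is not actually used in the computation; it is a hypothesis of the theorem, but the identity follows without invoking it.

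First I would expand each summand as
\begin{equation*}
\|x - \varphi_i\|^2 = \|x\|^2 - 2\operatorname{Re}\langle x,\varphi_i\rangle + \|\varphi_i\|^2.
\end{equation*}
Summing over $i$ and pulling the inner product out of the sum by linearity gives
\begin{equation*}
\sum_{i=1}^N \|x-\varphi_i\|^2 = N\|x\|^2 - 2\operatorname{Re}\Big\langle x, \sum_{i=1}^N \varphi_i\Big\rangle + \sum_{i=1}^N \|\varphi_i\|^2.
\end{equation*}

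Next I would substitute the three hypotheses. Since $x$ is unit norm, the first term equals $N$. Since $\sum_{i=1}^N \varphi_i = 0$, the middle term vanishes. Since the frame is unit norm, each $\|\varphi_i\|^2 = 1$, so the last term equals $N$. Adding these gives $2N$, as claimed.

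There is no real obstacle here; the only thing worth remarking on in the write-up is that the unit norm tight frame hypothesis is stronger than necessary for the stated identity, though of course one needs the sum-to-zero condition to hold, which is a natural condition for many unit norm tight frames (e.g.\ the simplex frame treated in the preceding corollary, which is the $N=M+1$ case).
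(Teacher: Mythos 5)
Your proof is correct and is essentially identical to the paper's: both expand $\|x-\varphi_i\|^2$, use the unit norms of $x$ and the $\varphi_i$, and kill the cross term with $\sum_{i=1}^N\varphi_i=0$. Your observation that tightness is never actually used is accurate --- the paper's own computation does not invoke it either.
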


\begin{proof}
For any unit norm $x\in \mathcal{H}_M$, we have
\begin{eqnarray*}
\sum_{i=1}^N\|x-\varphi_i\|^2 &=& \sum_{i=1}^N \|x\|^2 + \|\varphi_i\|^2 - \langle x,\varphi_i \rangle - \langle \varphi_i, x\rangle \\
&=& \sum_{i=1}^N( 2 - 2 {\rm Re}(\langle x, \varphi_i \rangle )) \\&=& 2N - 2{\rm Re} \langle x, \sum_{i=1}^N \varphi_i
\rangle \\
&=& 2N - 2{\rm Re}\langle x, 0\rangle = 2N.
\end{eqnarray*} 
\end{proof}

Finally, in this setting, we can get very accurate approximations for the sums of squared products of distances 
from vectors to the frame vectors.

\begin{theorem}
Let $\{\varphi_{i}\}_{i=1}^{N}$ be a unit norm tight frame in 
$\mathbb{R}^{M}$ and assume 
$\sum\limits_{i=1}^{N}\varphi_{i}=0$. Then for any unit norm $x\in \mathcal{H}_M$, 
\begin{equation*}
4N\Big(1-\frac{1}{M}\Big)
\leq
\sum\limits_{i=1}^{N}
\|x-\varphi_{i}\|^{2}\|y-\varphi_{i}\|^{2}
\leq
4N\Big(1+\frac{1}{M}\Big).
\end{equation*}
\end{theorem}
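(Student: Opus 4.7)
The plan is to reduce the product $\|x-\varphi_i\|^2\|y-\varphi_i\|^2$ to something linear in inner products, so that the hypothesis $\sum_i\varphi_i=0$ and the tight frame identity both kick in cleanly. First I would expand each factor using that $x$, $y$, and $\varphi_i$ are all unit norm, writing
\begin{equation*}
\|x-\varphi_i\|^2 = 2 - 2\langle x,\varphi_i\rangle, \qquad \|y-\varphi_i\|^2 = 2 - 2\langle y,\varphi_i\rangle.
\end{equation*}
Multiplying these gives $4\bigl(1-\langle x,\varphi_i\rangle\bigr)\bigl(1-\langle y,\varphi_i\rangle\bigr)$, and summing over $i$ produces four groups of terms.

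Next I would handle each group using the hypotheses. The constant terms sum to $4N$. The two single-inner-product terms are $-4\bigl\langle x,\sum_i \varphi_i\bigr\rangle$ and $-4\bigl\langle y,\sum_i \varphi_i\bigr\rangle$, both of which vanish by assumption. The only nontrivial piece is the cross term
\begin{equation*}
4\sum_{i=1}^N \langle x,\varphi_i\rangle \langle y,\varphi_i\rangle,
\end{equation*}
which I would recognize as $4\langle Sx, y\rangle$, where $S$ is the frame operator. Because $\{\varphi_i\}$ is a unit norm tight frame, $S=(N/M)I$, so this cross term equals $(4N/M)\langle x,y\rangle$. Therefore
\begin{equation*}
\sum_{i=1}^N \|x-\varphi_i\|^2 \|y-\varphi_i\|^2 = 4N + \frac{4N}{M}\langle x,y\rangle.
\end{equation*}

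Finally I would apply Cauchy--Schwarz, $|\langle x,y\rangle|\le \|x\|\|y\|=1$, to bound $\langle x,y\rangle\in[-1,1]$, giving the two-sided inequality $4N(1-1/M)\le \sum_i \|x-\varphi_i\|^2\|y-\varphi_i\|^2 \le 4N(1+1/M)$. Honestly, there is no real obstacle here — the statement is an identity plus Cauchy--Schwarz — but the one thing to watch is resisting the temptation to try to bound the product of norms term by term; the whole proof succeeds precisely because the sum collapses exactly, with the zero-sum hypothesis killing the linear cross terms and the tight frame identity collapsing the quadratic cross term to a single inner product of $x$ and $y$.
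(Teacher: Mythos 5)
Your proposal is correct and follows essentially the same route as the paper: expand each factor as $2\bigl(1-\langle x,\varphi_i\rangle\bigr)$, let the zero-sum hypothesis kill the linear terms, collapse the cross term via the frame operator identity $S=(N/M)I$, and finish with Cauchy--Schwarz. No gaps.
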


\begin{proof}
First, note that for each $i\in\{1,\ldots,N\}$ and each 
$x\in\mathbb{R}^{M}$ unit norm, we have 
\begin{equation*}
\|x-\varphi_{i}\|^{2}
=
\|x\|^{2}+\|\varphi_{i}\|^{2}-2\langle x,\varphi_{i}\rangle
=
2\big(1-\langle x,\varphi_{i}\rangle\big).
\end{equation*}
Using this, we may write 
\begin{align*}
\sum\limits_{i=1}^{N}
\|x-\varphi_{i}\|^{2}\|y-\varphi_{i}\|^{2}
&=
\sum\limits_{i=1}^{N}
2\big(1-\langle x,\varphi_{i}\rangle\big)
2\big(1-\langle y,\varphi_{i}\rangle\big)
\nonumber\\[2 pt]
&=
4\sum\limits_{i=1}^{N}
\big(1-\langle x,\varphi_{i}\rangle-\langle y,\varphi_{i}\rangle
+\langle x,\varphi_{i}\rangle\langle y,\varphi_{i}\rangle\big)
\nonumber\\[2 pt]
&=
4\Big(N-0-0
+\big\langle x,\sum\limits_{i=1}^{N}
\langle y,\varphi_{i}\rangle\varphi_{i}\big\rangle\Big)
\nonumber\\[2 pt]
&=
4\Big(N+\big\langle x,\frac{N}{M}y\big\rangle\Big)
=
4N\Big(1+\frac{1}{M}\langle x,y\rangle\Big).
\end{align*}
Using Cauchy-Schwarz and the fact that $x,y\in\HH_{M}$ are unit 
norm, we may deduce $-1\leq\langle x,y\rangle\leq1$. Combining 
this with the above calculation gives the desired result.
\end{proof}





\end{document}